\newtheorem{thm}{Theorem}[section]
\newtheorem{cor}[thm]{Corollary}
\newtheorem{lem}[thm]{Lemma}
\newtheorem{conj}[thm]{Conjecture}
\newtheorem{fact}[thm]{Fact}
\theoremstyle{definition}
\newtheorem{defn}[thm]{Definition}
\newtheorem{exmp}[thm]{Example}
\newtheorem{notn}[thm]{Notation}
\newcommand{\gen}[1]{\left\langle#1\right\rangle}
\theoremstyle{remark}
\newtheorem{rem}[thm]{Remark}
\theoremstyle{plain}
\newtheorem{thmx}{Theorem}
\begin{document}
\title[Algebraic independence and the classical Lotka-Volterra system]{Algebraic independence of the solutions of the classical Lotka-Volterra system}
\author{Yutong Duan}
\address{Yutong Duan\\ Department of Mathematics, Statistics, and Computer Science\\
	University of Illinois Chicago\\
	322 Science and Engineering Offices (M/C 249)\\
	851 S. Morgan Street\\
	Chicago, IL 60607-7045}
\email{yduan24@uic.edu}
\author{Joel Nagloo}
\address{Joel Nagloo\\ Department of Mathematics, Statistics, and Computer Science\\
	University of Illinois Chicago\\
	322 Science and Engineering Offices (M/C 249)\\
	851 S. Morgan Street\\
	Chicago, IL 60607-7045}
\email{jnagloo@uic.edu}
\thanks{\today. J. Nagloo and Y. Duan are partially supported by NSF grants DMS-2203508 and DMS-2348885. Some of the work in this paper was completed while J. Nagloo was supported by an AMS Centennial Fellowship.}

\maketitle

\begin{abstract}
\sloppy Let $(x_1,y_1),\ldots,(x_n,y_n)$ be distinct non-constant and non-degenerate solutions of the classical Lotka-Volterra system
\begin{equation}\notag
\begin{split}
    x'= axy + bx \\
    y'= cxy + dy\\
\end{split}
\end{equation}where $a,b,c,d\in\mathbb{C}\setminus\{0\}$. We show that if $d$ and $b$ are linearly independent over $\mathbb{Q}$, then the solutions are algebraically independent over $\mathbb{C}$, that is $tr.deg_{\mathbb{C}}\mathbb{C}(x_1,y_1,\ldots,x_n,y_n)=2n$. As a main part of the proof, we show that the set defined by the system in universal differential fields, with $d$ and $b$ linearly independent over $\mathbb{Q}$, is strongly minimal and geometrically trivial. Our techniques also allows us to obtain partial results for some of the more general $2d$-Lotka-Volterra system.
\end{abstract}

\section{Introduction}
In this paper, we initiate the study of the two dimensions ($2d$) Lotka-Volterra system

\begin{equation}\label{gLV}
\begin{dcases}
    x'= x(a_1x + a_2y + a_3) \\
    y'= y(b_1x+b_2y+b_3),\\
\end{dcases}
\end{equation}
where $a_1,a_2,a_3,b_1,b_2,b_3\in\mathbb{C}$, in the context of model theory. This quadratic system was first studied in the early 1900's independently by Lotka \cite{Lotka} and Volterra \cite{Volterra} in the special case of what we have called the classical Lotka-Volterra system. Initially appearing in ecology, where it was showed to model two species (predator and prey) in competition, it has since appeared in several other areas such as chemistry and physics (cf. \cite{AhmadStamova}, \cite{LeconteMassonQi}, \cite{Hering}), although we do not focus on those applications. The $2d$-Lotka-Volterra system has also been extensively studied in the framework of integrability. We direct the reader to the work of Cair\'o, Giacomini and Llibre \cite{CairoGiacominiLlibre} where a complete classification of all parameters such that the $2d$-Lotka–Volterra system has a Liouvillian first integral is given, and where the history of this problem - including an extensive literature review - can be found.

In the context of model theory, using techniques from geometric stability (which they further developed),  Eagles and Jimenez \cite{EaglesJimenez}, have recently showed that the non-constant and non-degenerate solutions of the classical Lotka-Volterra system
\begin{equation}
\begin{dcases}
    x'= axy + bx \\
    y'= cxy + dy\\
\end{dcases}
\end{equation}
are almost never Liouvillian. By non-degenerate solutions of the $2d$-Lotka–Volterra system (\ref{gLV}), we mean solutions $(x,y)$ of the system so that neither $x=0$ nor $y=0$. Indeed, it is not hard to see that the systems 
\begin{equation}\notag
\begin{dcases}
    x= 0 \\
    y'= y(b_2y+b_3),\\
\end{dcases}
\end{equation}
and 
\begin{equation}\notag
\begin{dcases}
    x'= x(a_1x + a_3) \\
    y= 0,\\
\end{dcases}
\end{equation}
are subsystems of the $2d$-Lotka–Volterra system. 

In essence, our first result gives that (in the strongest possible algebraic terms) for the classical Lotka-Volterra system, when $\frac{d}{b}\not\in\mathbb{Q}$, these are the only subsystems. In other words, we show that the system, including the non-degeneracy conditions, is strongly minimal. Recall, that a planar system of algebraic differential equations
\begin{equation}\notag
\begin{dcases}
    x'= f(x,y) \\
    y'= g(x,y)\\
    x\neq0 \\
    y\neq0,\\
\end{dcases}
\end{equation}
where $f,g\in\mathbb{C}[x,y]$, is {\em strongly minimal} if for any differential field $K$ extending $\mathbb{C}$ and any solution $(x,y)$ in some differential field extension of $K$, \begin{equation}\label{StrongMinimality}tr.deg_KK(x,y)=0\text{ or }2.\end{equation}
This notion, which is fundamental in model theory, is closely related to irreducibility as defined by Umemura \cite{Umemura} in his study of the Painlevé equations. Roughly speaking, given the class of {\em classical functions}, an autonomous planar system of differential equations is said to be irreducible if none of its solutions lie in that class and none satisfy any first-order differential equation; hence all of its solutions define {\em new functions}. In other words, the irreducibility of a system of differential equations means that the latter cannot be studied by reducing to simpler forms of solutions. The idea of formalizing this notion originated in the work of Painlev\'e \cite{Painleve} in the early 1900's and was described in modern mathematical language in the above-mentioned work of Umemura and later in the work of the second author with Pillay \cite{NaglooPillay}: an automous planar system of differential equations is irreducible if and only if it is strongly minimal. We also mention related work of Casale \cite{Casale} where the question of irreducibility is defined and studied using Malgrange's approach to differential Galois theory.

Strong minimality has also been instrumentally used to obtain several functional transcendence results for solutions of algebraic differential equations (cf. \cite{Nagloo}), including recent work on the Ax-Lindemann-Weierstrass and Ax-Schanuel conjectures for automorphic functions (cf. \cite{BlazquezCasaleFreitagNagloo} and \cite{CasaleFreitagNagloo}). In this paper, we successfully carry out this strategy - of using strong minimality to study the existence of algebraic relations between solutions - for the classical Lotka-Volterra system. Our first results are as follows.
\begin{thmx}\label{Theorem1}
Let $a,b,c,d\in\mathbb{C}^*$ and assume that $d$ and $b$ are linearly independent over $\mathbb{Q}$. Then the classical Lotka-Volterra system
\begin{equation}\notag
\begin{dcases}
    x'= axy + bx \\
    y'= cxy + dy\\
     x\neq0 \\
    y\neq0\\
\end{dcases}
\end{equation}
is strongly minimal.
\end{thmx}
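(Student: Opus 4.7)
The strategy is, assuming strong minimality fails, to extract from the resulting algebraic relation a Darboux-style polynomial identity over $K$, and then to show that the hypothesis $\frac{d}{b}\not\in\mathbb{Q}$ is incompatible with its existence.

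Suppose there exist a differential field $K \supseteq \mathbb{C}$ and a non-degenerate solution $(x,y)$ in an extension with $tr.deg_K K(x,y) = 1$. Then there is an irreducible $P(X,Y) \in K[X,Y]$ of total degree $N \geq 1$ with $P(x,y) = 0$, and $P$ generates the vanishing ideal of $(x,y)$ in $K[X,Y]$. Differentiating the identity $P(x,y)=0$ and substituting the system equations shows that $P_X(aXY+bX) + P_Y(cXY+dY) + P^{\partial}$ vanishes at $(x,y)$ (where $P^{\partial}$ denotes the coefficient-wise derivative), hence is divisible by $P$. A degree count then yields the \emph{extended Darboux identity}
\begin{equation}\notag
P_X(aXY+bX) + P_Y(cXY+dY) + P^{\partial} = (\alpha X + \beta Y + \gamma)\,P
\end{equation}
for some $\alpha, \beta, \gamma \in K$.

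First I would show that $\alpha = \beta = 0$. Since $P$ is irreducible and $x, y \neq 0$, neither $X$ nor $Y$ divides $P$, so $\bar P(Y) := P(0,Y) \in K[Y]$ is a non-zero polynomial. Reducing the identity modulo $X$ gives $dY\bar P'(Y) + \bar P^{\partial}(Y) = (\beta Y + \gamma)\bar P(Y)$, and extracting the coefficient of $Y^{\deg\bar P+1}$ forces $\beta = 0$; by symmetry, $\alpha = 0$. The degree-$(N+1)$ piece of the identity then reduces to $a\tilde P_X + c\tilde P_Y = 0$, where $\tilde P$ is the top-degree homogeneous part of $P$; since $\tilde P$ is homogeneous of degree $N$, it follows that $\tilde P = \lambda(cX - aY)^N$ for some $\lambda \in K^*$.

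To finish I would examine the degree-$N$ component of the identity. Since $a, c \in \mathbb{C}$ one has $\tilde P^{\partial} = \lambda'(cX - aY)^N$, and a direct computation of $bX\tilde P_X + dY\tilde P_Y$ allows the degree-$N$ identity to be rearranged as
\begin{equation}\notag
aXY(P_{N-1})_X + cXY(P_{N-1})_Y = (cX - aY)^{N-1}\bigl[c(\gamma\lambda - \lambda' - Nb\lambda)X - a(\gamma\lambda - \lambda' - Nd\lambda)Y\bigr].
\end{equation}
The left-hand side is divisible by both $X$ and $Y$, so evaluating the right-hand side at $X = 0$ and at $Y = 0$ yields respectively $\gamma\lambda - \lambda' = Nd\lambda$ and $\gamma\lambda - \lambda' = Nb\lambda$; subtracting and using $N \geq 1$, $\lambda \neq 0$ forces $b = d$, contradicting $\frac{d}{b}\not\in\mathbb{Q}$.

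The main obstacle, and where the hypothesis $\frac{d}{b}\not\in\mathbb{Q}$ enters decisively, is precisely this last coefficient matching: although $\lambda \in K$ may be non-constant and the cofactor scalar $\gamma$ is a priori uncontrolled, the two independent boundary specializations $X = 0$ and $Y = 0$ must agree on the value of $\gamma - \lambda'/\lambda$, and the rigidity coming from the fact that the vector field has coefficients in $\mathbb{C}$ then forces the rational relation $b = d$.
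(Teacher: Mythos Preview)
Your argument is correct, and in fact proves more than the stated theorem: the final coefficient matching only uses $b\neq d$, not $\tfrac{d}{b}\notin\mathbb{Q}$. Every step checks out --- the cofactor has degree at most one by counting, the reductions modulo $X$ and $Y$ kill its linear part, the top homogeneous piece is forced to be a power of $cX-aY$, and the two boundary specializations of the degree-$N$ identity pin down $\gamma-\lambda'/\lambda$ to two incompatible values unless $b=d$.

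This is a genuinely different route from the paper's. The paper first invokes the degree-of-nonminimality bound of Freitag--Jaoui--Moosa to reduce to $K=\mathcal{C}(x,y)$ for a generic solution $(x,y)$, then expands a putative second solution as a Puiseux series in $y$ with coefficients in $K^{alg}$, extracting an element $a\in K^{alg}$ satisfying $a'=(1-r\alpha)a$. A second Puiseux expansion of this $a$ over $\mathcal{C}(x)^{alg}$, combined with the classical criterion for algebraic solutions of $v'=Fv$ over $\mathcal{C}(t)$, eventually forces a contradiction; the hypothesis $\alpha=\tfrac{d}{b}\notin\mathbb{Q}$ is used precisely to rule out the residues arising in that criterion. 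Your Darboux-polynomial approach bypasses all of this machinery: no nonminimality-degree reduction, no Puiseux series, no auxiliary linear ODE lemmas. The payoff is a sharper conclusion --- strong minimality whenever $b\neq d$ --- which the paper explicitly says its own methods cannot reach (see the remark on the case $\tfrac{d}{b}\in\mathbb{Q}$ and the Acknowledgement, where this stronger statement is attributed to independent work of Jaoui). What the paper's framework buys in exchange is portability: the Puiseux-series method adapts with little change to the $2d$-Lotka--Volterra variant of Theorem~B, whereas your top-degree analysis would need to be reworked for that system.
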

We obtain a similar result for special cases of the $2d$-Lotka–Volterra system.

\begin{thmx}\label{Theorem2}
\sloppy Let $a,b,c,d\in\mathbb{C}^*$. Then the 2d-Lotka-Volterra system
\begin{equation}\notag
\begin{dcases}
    x'= x(ay + b) \\
    y'= y(cx + dy)\\
     x\neq0 \\
    y\neq0\\
\end{dcases}
\end{equation}
is strongly minimal.
\end{thmx}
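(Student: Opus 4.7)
The plan is to follow the same strategy as for Theorem~\ref{Theorem1}: translate strong minimality into the absence of nontrivial Darboux polynomials for the vector field
\[
D \;=\; x(ay+b)\,\partial_{x} \;+\; y(cx+dy)\,\partial_{y}
\]
on $\mathbb{G}_{m}^{2}$, and rule these out by an explicit classification. Recall that strong minimality fails precisely when, over some differential field $K\supseteq\mathbb{C}$, there is an irreducible $P\in K[x,y]$ coprime to $xy$ such that $\{P(x,y)=0\}\subseteq\mathbb{G}_{m}^{2}$ is $D$-invariant; I would show no such $P$ exists.

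The first step is a standard parameter-descent reducing to the case $P\in\mathbb{C}[x,y]$. The $D$-invariance translates to the identity
\[
P^{\partial}(x,y) \;+\; x(ay+b)\,P_{x}(x,y) \;+\; y(cx+dy)\,P_{y}(x,y) \;=\; Q(x,y)\,P(x,y)
\]
for some $Q\in K[x,y]$, where $P^{\partial}$ denotes the coefficient-wise derivative. Since for each fixed total degree the space of candidate pairs $(P,Q)$ is finite-dimensional over the constants, a Wronskian-type construction produces a genuine constant-coefficient Darboux polynomial $P\in\mathbb{C}[x,y]$ with $D(P)=RP$, $R\in\mathbb{C}[x,y]$, and $P$ coprime to $xy$.

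The second step is the classification. Since $D$ raises total degree by at most $1$, $\deg R\le 1$, so I write $R=\alpha x+\beta y+\gamma$. Expanding $P=\sum_{k=0}^{M}P_{k}(x)y^{k}$ with $P_{0}, P_{M}\neq 0$ and matching coefficients of $y^{\ell}$ in $D(P)=RP$, the $y^{0}$ and $y^{M+1}$ equations give
\[
bxP_{0}'(x)=(\alpha x+\gamma)P_{0}(x),\qquad axP_{M}'(x)+MdP_{M}(x)=\beta P_{M}(x),
\]
whose only polynomial solutions force $\alpha=0$, $P_{0}=Cx^{s}$ with $\gamma=bs$, and $P_{M}=C_{M}x^{n}$ with $\beta=an+Md$. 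The $y^{1}$ equation then has no nonzero polynomial solution, so $P_{1}\equiv 0$, which simultaneously pins down $\beta=as$. An easy induction propagates $P_{k}\equiv 0$ for $1\le k\le M-1$. Finally, the $y^{M}$ equation collapses to $bnC_{M}x^{n}+McC_{M}x^{n+1}=bsC_{M}x^{n}$, and since $c, C_{M}\neq 0$ the coefficient of $x^{n+1}$ forces $M=0$, contradicting $M\ge 1$. Hence no nontrivial Darboux polynomial exists.

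The main technical obstacle is the descent step: while the underlying finite-dimensionality and Wronskian arguments are standard, carrying them out uniformly in $K$ requires some care with the family of cofactors. A secondary point worth noting, in contrast with Theorem~\ref{Theorem1}, is that no irrationality condition on $(a,b,c,d)$ is needed here; the extra $dy^{2}$ term in the second equation couples the $y^{M+1}$-coefficient equation to the leading structure of $P$ strongly enough that the above recursion closes up uniformly for all nonzero parameters, which is what allows Theorem~\ref{Theorem2} to be stated without an analogue of $d/b\notin\mathbb{Q}$.
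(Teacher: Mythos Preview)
Your descent step is where the argument breaks. The claim that a Darboux polynomial over an arbitrary differential field $K\supseteq\mathbb{C}$ can be replaced by one with coefficients in $\mathbb{C}$ is false for autonomous planar vector fields, and no Wronskian-type argument repairs it. The Darboux condition over $K$ reads $P^{\partial}+L(P)=QP$; over $\mathbb{C}$ the term $P^{\partial}$ vanishes, so you are comparing two genuinely different systems of constraints on the coefficients, not the same algebraic system over a larger field. Finite-dimensionality of the coefficient space does not help, because the map $P\mapsto P^{\partial}+L(P)-QP$ is a $\partial$-connection rather than a $K$-linear map.

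A clean counterexample: take $x'=x$, $y'=\alpha y$ with $\alpha\notin\mathbb{Q}$, restricted to $x\neq 0$, $y\neq 0$. The operator $L=x\partial_x+\alpha y\partial_y$ preserves degree and is diagonal on monomials with pairwise distinct eigenvalues $i+\alpha j$, so every Darboux polynomial over $\mathbb{C}$ is a monomial and none is coprime to $xy$. Yet over $K=\mathbb{C}(u)$ with $u'=u$, the polynomial $P=x-u$ satisfies $P^{\partial}+L(P)=-u+x=P$, hence is Darboux with cofactor $1$ and coprime to $xy$. This system is therefore not strongly minimal even though it has no invariant curve over $\mathbb{C}$; your Step~2 would correctly find nothing over $\mathbb{C}$ while Step~1 would wrongly declare the proof finished.

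This is exactly why the paper cannot stop at Corollary~\ref{corollaryLV6}, which is essentially what your Step~2 reproves. Instead the paper invokes the Freitag--Jaoui--Moosa bound (Fact~\ref{FreitagJaouiMoosa}) to reduce to the single field $K=\mathcal{C}(x_{0},y_{0})$ generated by one generic solution, extracts from Lemma~\ref{LemmaLV6} a nonzero $a\in K^{alg}$ with $a'=a$, and then runs a second Puiseux expansion of $a$ over $\mathcal{C}(x_{0})^{alg}$ to derive a contradiction via an auxiliary inhomogeneous linear ODE with no algebraic solutions. That second layer of analysis over the non-constant base $K$, absent from your proposal, is the actual content of the proof.
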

Since the system is symmetric in the variables $x$ and $y$, one also immediately see that Theorem \ref{Theorem2} holds for the $2d$-Lotka-Volterra system (\ref{gLV}) with $a_3=b_2=0$. We will not focus on such distinctions. Our proof combines in quite a novel way the new techniques developed in the work of the second author, joint with Freitag, Jaoui and Marker \cite{FreitagJaouiMarkerNagloo}, using Puiseux series to prove strong minimality of some second order algebraic differential equations, with that of Freitag, Jaoui and Moosa \cite{FreitagJaouiMoosa} on the degree of non-minimality. In the latter work, it is shown that in order to prove that an autonomous algebraic differential equations (or systems) is strongly minimal, one only has to check that condition (\ref{StrongMinimality}) holds for a differential field, $K$, generated by one solution of the system.

As noted above,  establishing strong minimality has strong consequences on the questions of existence of algebraic relations between the solutions of the system. Indeed, one immediately obtains, using \cite[Proposition 5.8]{CasaleFreitagNagloo}, that the systems given in Theorem \ref{Theorem1} and Theorem \ref{Theorem2} are geometrically trivial (see Definition \ref{GeometricTriviality}): if any distinct pair of solutions is algebraically independent over $\mathbb{C}$ (indeed over any differential field extension), then so are any finite number of distinct solutions. Combining this with an old striking result of Brestovski \cite[Theorem 2]{Brestovski} for certain order two differential equations, we obtain the main result stated in the abstract.

\begin{thmx}\label{Theorem3}
 \sloppy Let $a,b,c,d\in\mathbb{C}^*$ and assume that $d$ and $b$ are linearly independent over $\mathbb{Q}$.  Assume that $(x_1,y_1),\ldots,(x_n,y_n)$ are distinct non-constant solutions, in any differential field extension\footnote{For example one can take the field of meromorphic functions on some open disc $D\subset\mathbb{C}$.} of $\mathbb{C}$, of the classical Lotka-Volterra system
    \begin{equation}\notag
\begin{dcases}
    x'= axy + bx \\
    y'= cxy + dy\\
     x\neq0 \\
    y\neq0.\\
\end{dcases}
\end{equation}
Then $tr.deg_{\mathbb{C}}\mathbb{C}(x_1,y_1,\ldots,x_n,y_n)=2n$, that is the solutions are algebraically independent over $\mathbb{C}$. 
\end{thmx}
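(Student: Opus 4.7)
The plan is to combine Theorem~\ref{Theorem1} with a geometric triviality argument to reduce the $n$-point algebraic independence statement to a two-point statement, and then to eliminate $y$ and invoke Brestovski's theorem on the resulting second-order ODE in $x$. In detail, Theorem~\ref{Theorem1} gives, under the hypothesis $d/b\not\in\mathbb{Q}$, that the set $X$ of non-degenerate solutions of the classical Lotka--Volterra system is strongly minimal. By \cite[Proposition~5.8]{CasaleFreitagNagloo}, strong minimality forces $X$ to be geometrically trivial, which is precisely the statement that a tuple of distinct non-constant solutions $(x_1,y_1),\ldots,(x_n,y_n)$ is algebraically independent over $\mathbb{C}$ if and only if every two of them are already algebraically independent over $\mathbb{C}$. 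Since each non-constant solution satisfies $tr.deg_{\mathbb{C}}\mathbb{C}(x,y)=2$ (the only constant non-degenerate solution being the fixed point $(-d/c,-b/a)$), the problem reduces to showing that any two distinct non-constant solutions are algebraically independent over $\mathbb{C}$.

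To handle the pair case I would eliminate $y$ via $y=(x'/x-b)/a$ coming from the first equation; differentiating this expression and substituting into the second equation yields the order-two algebraic ODE
\[
xx''-(x')^2 = x(cx+d)(x'-bx).
\]
Because $y$ is recovered from $(x,x')$, the map $(x,y)\mapsto x$ is a bijection between non-constant non-degenerate solutions of the Lotka--Volterra system and non-constant solutions of this ODE, and it induces the equality $\mathbb{C}(x_1,y_1,x_2,y_2)=\mathbb{C}(x_1,x_1',x_2,x_2')$. Moreover, two distinct non-constant solutions of the system give rise to two distinct $x$'s, since $x_1=x_2$ would force $y_1=y_2$ through the above formula. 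The required pairwise independence thus becomes the statement that for any two distinct non-constant solutions $x_1,x_2$ of the reduced ODE, one has $tr.deg_{\mathbb{C}}\mathbb{C}(x_1,x_1',x_2,x_2')=4$.

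Finally, I would invoke Brestovski's theorem \cite[Theorem~2]{Brestovski}, which establishes precisely this kind of algebraic independence between distinct solutions for a class of order-two algebraic ODEs. The main obstacle will be to verify that the second-order equation displayed above lies in Brestovski's class and to pinpoint where the assumption $d/b\not\in\mathbb{Q}$ is actually invoked---most plausibly as a non-resonance condition tied to the exponents of formal solutions near the singular locus $x=0$, where the linear parts of the system have characteristic rates $b$ and $d$. Once that verification is carried out, pairwise algebraic independence of solutions of the reduced ODE follows from Brestovski's result, and combined with the geometric triviality reduction of the first step it delivers $tr.deg_{\mathbb{C}}\mathbb{C}(x_1,y_1,\ldots,x_n,y_n)=2n$ as required.
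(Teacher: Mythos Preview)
Your overall architecture---reduce to $n=2$ via strong minimality and geometric triviality, then apply Brestovski's theorem \cite[Theorem~2]{Brestovski} to a second-order ODE---is exactly the paper's. The gap is in the elimination step. You eliminate $y$ and obtain the equation $xx''-(x')^2=x(cx+d)(x'-bx)$, and you flag as the ``main obstacle'' checking that this equation lies in Brestovski's class. It does not, at least not in any evident way: Brestovski's hypothesis demands the very specific shape $F'=\sum a_i\,G_i'/G_i$ with $a_i\in\mathcal{C}$ linearly independent over $\mathbb{Q}$, and your equation, rewritten as $(x'/x)'=(x'/x-b)(cx+d)$, does not split like that. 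This is exactly the point where the paper makes a different choice of variable.

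After normalizing to $a=c=1$ (via $x\mapsto cx,\ y\mapsto ay$), the paper sets $z=x-y$ rather than eliminating $y$. From $z'=x'-y'=bx-dy$ one solves $(b-d)x=z'-dz$ and $(b-d)y=z'-bz$; combining with $x'/x=y+b$ and $y'/y=x+d$ gives
\[
z'=b\,\frac{(z'-bz)'}{z'-bz}-d\,\frac{(z'-dz)'}{z'-dz},
\]
which is manifestly of Brestovski form with $F=X$, $G_1=X'-bX$, $G_2=X'-dX$, and coefficients $b,-d$ linearly independent over $\mathbb{Q}$ precisely when $d/b\notin\mathbb{Q}$. This is also where the hypothesis enters, answering the question you raise. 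One more correction: Brestovski's theorem does not deliver algebraic independence outright; its conclusion is $F(z_1)^m=F(z_2)^m$, i.e.\ $(x_1-y_1)^m=(x_2-y_2)^m$, which forces $\mathrm{tr.deg}_{\mathbb{C}(x_1,y_1)}\mathbb{C}(x_1,y_1,x_2,y_2)\leq 1$ and hence contradicts Theorem~\ref{Theorem1}. So the last step is an appeal back to strong minimality, not a standalone transcendence statement.
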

The paper is organized as follows. In Section \ref{Section2}, we describe some of the basics of the model theory of differentially closed fields. In particular we give all the tools, from that set up, that will be needed for our proofs. We also prove that the $\omega$-categoricity conjecture holds for all order $2$ strongly minimal differential equations of Brestovski form. In Section \ref{Section3}, we prove Theorem \ref{Theorem3} first, assuming Theorem \ref{Theorem1}, and then proceed to prove the latter. We end with the proof of Theorem \ref{Theorem2}.

\subsection*{Acknowledgement}
As we were finishing up writing this paper, we learned that R\'emi Jaoui also recently realized that some completely different (but related) techniques can be used to prove Theorem \ref{Theorem3} and to show that the case $b=d$ is the only obstruction to $LV_{a,b,c,d}$ being strongly minimal. It is not yet clear to any of us whether these ideas could be extended to the 2d-Lotka-Volterra system. The first author, Eagles and Jimenez have recently explored how the ideas in the current paper (especially Brestovski's approach) and those of Jaoui could be used to extend the above results, including studying the algebraic relations between the solutions of {\em multiple} Lotka-Volterra systems. Their results can now be found in \cite{DuanEaglesJimenez}. We thank the anonymous referee for important comments and suggestions. We also thank James Freitag for useful discussions around the $\omega$-categoricity of equations of Brestovski form.

\section{Preliminaries}\label{Section2}
In this paper, it will be convenient and important to work in $(\mathcal{U},D)$, a ``universal differential field'' in the sense of Kolchin. So we begin by recalling some of the main properties of such a field $\mathcal{U}$. We restrict our attention to those properties and notations that will be needed in the current paper. We direct the reader to \cite{Marker} for a more comprehensive view, including discussions concerning the existence of $(\mathcal{U},D)$. 

First, we require that $(\mathcal{U},D)$  is a differential field of characteristic zero. So $\mathcal{U}$ is a field of characteristic $0$ and $D:\mathcal{U}\rightarrow \mathcal{U}$ is a derivation, i.e., an additive homomorphism that satisfies the Leibniz rule $D(ab)=bD(a)+aD(b)$. By a differential subfield $K$ of $\mathcal{U}$ we mean a differential field $(K,\partial)$ such that $K$ is a subfield of $\mathcal{U}$ and such that the restriction $D_{|K}=\partial$. We will simply write $D$ for $\partial$, i.e., we will keep the same notation for both derivations. We also say that $\mathcal{U}$ is a differential field extension of $K$.
\begin{notn}
\sloppy For a differential subfield $K\subseteq\mathcal{U}$, we denote by $K\{X\}=K[X,DX,D^2X,\ldots]$ the ring of differential polynomials in variable $X$ and by $K\gen{X}$ the field of differential rational function, i.e., the quotient field of $K\{X\}$. We will write $K^{alg}$ for the algebraic closure of $K$ in the usual algebraic sense.
\end{notn}

We also require that $\mathcal{U}$ is {\em differentially closed}, namely, if a finite system of differential polynomial equations in $\mathcal{U}\{X\}$ has a solution in a differential field extension of $\mathcal{U}$, then it already has one in $\mathcal{U}$. Finally we assume that  $\mathcal{U}$ is saturated of cardinality the continuum and so we can (and will) identify its field of constants $C_\mathcal{U}=\{u\in\mathcal{U}:Du=0\}$ with the field of complex numbers $\mathbb{C}$. Furthermore, it also implies that any finitely generated (over $\mathbb{Q}$) differential field can be embedded in $\mathcal{U}$. In this sense, $\mathcal{U}$ will be a universal domain in which all our solutions, parameters/coefficients and subfields live. So from now on, we fix such a universal differential field  $(\mathcal{U},D)$. 

Given $K$ a differential field (so a differential subfield of $\mathcal{U}$), by a {\em definable set\footnote{Recall that $DCF_0$ has quantifier elimination.} over $K$}, we mean a finite Boolean combination of solution sets, in $\mathcal{U}$, of differential polynomial equations in $K\{X\}$. When we only say ``definable set'' (without mentioning a specific differential field), we will always mean definable over $\mathcal{U}$ and so over some unspecified differential subfield of $\mathcal{U}$. If $x\in{\mathcal U}$, we denote by $K\gen{x}$ the differential field generated by $x$, i.e., the field $K(x,x',x'',\ldots)$, where $x'$ is short for $D(x)$.

\subsection{Autonomous planar systems} Let $\mathcal{C}$ be a finitely generated (over $\mathbb{Q}$) algebraically closed subfield of $\mathbb{C}$. Recall that by a planar system over $\mathcal{C}$ we mean a system of differential equations of the form
\begin{equation}\notag
S=\begin{dcases}
    X'= f(X,Y) \\
    Y'= g(X,Y)\\
\end{dcases}
\end{equation}
where $f,g\in\mathcal{C}[X,Y]$ are polynomials in two variables. In the rest of the paper, we will more carefully distinguish the notation for a solution $(x,y)$ of a system, written in lowercase $x$ and $y$,  and the notation for the variables $X$ and $Y$ used to define the systems. 

Given such a system $S$, for any differential field extension $K$ of $\mathcal{C}$ we obtain a well-defined derivation $D_S$ on $K[X,Y]$ (and even on $K(X,Y)$)
\[D_S:= D+f(X,Y)\frac{\partial}{\partial X}+g(X,Y)\frac{\partial}{\partial Y}.\]
So if $P\in K[X,Y]$, then $D_SP=P^D+f(X,Y)\frac{\partial P}{\partial X}+g(X,Y)\frac{\partial P}{\partial Y}$, where $P^D$ is the polynomial obtained by differentiating the coefficients of $P$.
\begin{defn}
A polynomial $P\in K[X,Y]$ is said to be $D_S$-invariant (or just invariant when the context is clear) if there is a polynomial $Q\in K[X,Y]$ such that $$D_SP=QP.$$
\end{defn}
If $F$ is a $D_S$-invariant polynomial and $I=(F)$, the ideal generated by it in $K[X,Y]$, satisfies $0\neq I\subset K[X,Y]$, then we call the zero locus $V(I)$ of $I$ a {\em $D_S$-invariant curve over $K$}. These can be seen as the subsystems of the original planar system over $K$.
Following the discussion in the introduction, unsurprisingly, we have the following example.
\begin{exmp}
An easy computation shows that $P:=XY$ is an invariant polynomial for the classical Lotka-Volterra system
$$X(aY+b)\frac{\partial P}{\partial X}+Y(cX+d)\frac{\partial P}{\partial Y}=(aY+cX+b+d)P.$$
It also is one for the 2d-Lotka-Volterra system.
\end{exmp}
We have the following key notion.
\begin{defn}\label{PlanarStronglyMinimal}
Let $\mathcal{S}:=\{(x,y)\in\mathbb{A}^2\;:\;x'=f(x,y)\text{ and } y'=g(x,y) \}$ be the solution of some planar system over $\mathcal{C}$. We say that $\mathcal{S}$ is {\em strongly minimal} if for any differential field extension $K$ of $\mathcal{C}$, one of the following equivalent conditions holds
\begin{enumerate}
\item For any solution $(x,y)\in\mathcal{S}$
\[tr.deg_KK(x,y)=0\text{ or }2,\]
\item There are no $D_S$-invariant curve defined over $K$.
\end{enumerate}
\end{defn}
We direct the reader to \cite[Section 2.2]{NaglooPillay} where a proof of the above equivalence is given. Moreover, it is not hard to see that if $tr.deg_KK(x,y)=1$, then there is a non-zero polynomial $F\in K[X,Y]$ such that $F(x,y)=0$. This polynomial is $D_S$-invariant.
\begin{rem} {\color{white}Test}
\begin{enumerate}
\item We will usually say that the system itself is strongly minimal. This will of course mean that its set of solutions is strongly minimal.
\item The second equivalent condition is usually referred to as Umemura's Condition (J).
\item If $tr.deg_KK(x,y)=2$, we will say that $(x,y)$ is a $K$-generic solution of the system $S$.
\item Given a planar system $S$ over $\mathcal{C}$, suppose that one already has finitely many $D_S$-invariant polynomials $F_1,\ldots ,F_n$ over $\mathcal{C}$. Then one can ask whether these give rise to the only $D_S$-invariant curves (over any $K$). This is equivalent to asking whether the system, together with the conditions $F_i(X,Y)\neq 0$, is strongly minimal.
\end{enumerate}
\end{rem}
As mentioned in the Introduction, Umemura showed that strong minimality (i.e, Condition (J)) is a key concept that can be used to determine whether a system has classical solutions (cf. \cite{Umemura} and \cite{UmemuraWatanabe}). However, it is when placed in the context of model theory that the role that strong minimality can play on the problem of classifying the algebraic relations among solutions of the system is revealed. The following is the model theoretic definition of strong minimality.
\begin{defn}\label{ClassicalStrongMinimal}
A definable set $\mathcal{X}\subset\mathcal{U}$ is said to be strongly minimal if it is infinite and for every definable subset $\mathcal{Y}$ of $\mathcal{X}$, either $\mathcal{Y}$ or $\mathcal{X} \setminus \mathcal{Y}$ is finite.
\end{defn}
Given a $\mathcal{C}$-generic solution $(x,y)$ of some planar system $S$ over $\mathcal{C}$, the field $\mathcal{C}(x,y)$ is obviously a differential field and by definition has transcendence degree $2$ over $\mathcal{C}$. In particular $x'\in \mathcal{C}(x,y) $ and hence it follows that $\mathcal{C}\gen{x}=\mathcal{C}(x,y)$. So $x''\in \mathcal{C}\gen{x}$, meaning that there is a rational function $h\in \mathcal{C}(X,Z)$ such that $x''=h(x,x')$. Of course, one can do the same for $y$. It is not hard to see that the set of solutions of the equation $X''=h(X,X')$ is strongly minimal according to Definition \ref{ClassicalStrongMinimal} if and only if the planar system $S$ is strongly minimal according to Definition \ref{PlanarStronglyMinimal}. Indeed, the sets defined by the two will be in definable bijection, so that one can study the differential algebraic properties of one of the definable set by studying those of the other. It turns out that strongly minimal autonomous planar systems (and more generally $\mathbb{C}$-definable strongly minimal sets) can only have at most a ``binary structure''. Let us make this more precise.
\begin{defn}\label{GeometricTriviality}
Let $\mathcal{X}$ be a strongly minimal set definable over some differential field $K$. We say that $\mathcal{X}$ is {\em geometrically trivial} if for any differential field extension $F$ of $K$ and any finite collection of distinct $x_1,\ldots,x_n\in \mathcal{X}\setminus F^{alg}$, if $x_1,\ldots,x_n$ together with their derivatives\footnote{The strong minimality assumption guarantees that the differential fields $F\gen{x_i}$ has finite transcendence degree over $F$ and hence one has to only consider a finite number of derivatives.}  are algebraically dependent over $F$, then for some $i<j$ we have that $x_i,x_j$ together with their derivatives are algebraically dependent over $F$.
\end{defn}
We have the following fundamental result, a proof of which can be found in \cite[Proposition 5.8]{CasaleFreitagNagloo}.
\begin{fact}\label{SMimpliesGT}
Let $\mathcal{X}$ be a strongly minimal set and assume that $\mathcal{X}$ is definable over $\mathbb{C}$. Assume that $\text{ord}(\mathcal{X})>1$, where $$\text{ord}(\mathcal{X})=sup\{tr.deg_KK\gen{x}:x\in\mathcal{X}\text{ and $\mathcal{X}$ is definable over $K$}\}.$$Then $\mathcal{X}$ is geometrically trivial.
\end{fact}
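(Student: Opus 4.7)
My approach would be to deploy the Zilber trichotomy for strongly minimal sets in DCF$_0$, established by Hrushovski--Soko\-lo\-vi\'c and further developed by Pillay: every strongly minimal set $\mathcal{X}$ in DCF$_0$ is either (i) geometrically trivial, (ii) locally modular non-trivial, meaning non-orthogonal to the Manin kernel $A^\#$ of a simple abelian variety $A$ that does not descend to the constants, or (iii) non-orthogonal to the field of constants $\mathbb{C}$. The plan is to rule out (ii) and (iii) using the two hypotheses: $\mathrm{ord}(\mathcal{X})>1$ and $\mathbb{C}$-definability.

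First, I would dismiss case (iii). Non-orthogonality between strongly minimal sets in DCF$_0$ yields, after a base change to a differential field $K$ over which both are defined and containing the needed parameters, a generic pair $(x,c)\in\mathcal{X}\times\mathbb{C}$ with $x$ and $c$ interalgebraic over $K$. Then $x$ and each of its derivatives lie in $(K\langle c\rangle)^{alg}=K(c)^{alg}$, so $tr.deg_K K\langle x\rangle=tr.deg_K K(c)=1$. This contradicts $\mathrm{ord}(\mathcal{X})>1$.

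Second, I would dismiss case (ii) using the $\mathbb{C}$-definability of $\mathcal{X}$. If $\mathcal{X}$ were non-orthogonal to $A^\#$, a canonical base argument shows that the parameters defining the witnessing correspondence can be taken from the algebraic closure of $\mathbb{C}$, which forces $A$ itself to be isogenous to an abelian variety defined over $\mathbb{C}$. But in that case $A^\#$ coincides, up to a finite-index subgroup, with $A(\mathbb{C})$, which is strongly minimal only when $\dim A=1$, and is then manifestly non-orthogonal to the field of constants --- returning us to the already-excluded case~(iii). Combining the two steps, only option (i) remains, so $\mathcal{X}$ is geometrically trivial.

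The main obstacle is the second step: formalizing the descent argument from the $\mathbb{C}$-definability of $\mathcal{X}$ requires the full Hrushovski--Soko\-lo\-vi\'c structure theorem identifying locally modular non-trivial strongly minimal sets in DCF$_0$ with (twisted) Manin kernels, together with careful use of canonical bases and the behavior of $A^\#$ under descent to the constants. By contrast, step one is immediate from the definition of non-orthogonality, and the hypothesis $\mathrm{ord}(\mathcal{X})>1$ dispatches case (iii) cleanly.
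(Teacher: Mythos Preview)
The paper does not prove this statement; it records it as a \emph{Fact} and cites \cite[Proposition~5.8]{CasaleFreitagNagloo} for the proof. So there is no in-paper argument to compare against.

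That said, your outline via the Zilber trichotomy for strongly minimal sets in $\mathrm{DCF}_0$ (Hrushovski--Sokolovi\'c, Pillay--Ziegler) is exactly the route taken in the cited reference, and your handling of case~(iii) is correct: non-orthogonality to the constants forces the generic type of $\mathcal{X}$ to have order~$1$, contradicting the hypothesis. Your identification of case~(ii) as the crux is also right. However, the sentence ``a canonical base argument shows that the parameters defining the witnessing correspondence can be taken from the algebraic closure of $\mathbb{C}$'' is not quite the mechanism. The correspondence witnessing non-orthogonality between $\mathcal{X}$ and $A^\#$ lives over parameters including those of $A$; canonical bases alone do not push $A$ down to $\mathbb{C}$. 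The actual argument is by conjugation: since $\mathcal{X}$ is $\mathbb{C}$-definable, for every $\sigma\in\mathrm{Aut}(\mathcal{U}/\mathbb{C})$ one has $\mathcal{X}=\sigma(\mathcal{X})$ non-orthogonal to $(A^\sigma)^\#$, hence by transitivity $A^\#$ and $(A^\sigma)^\#$ are non-orthogonal, which by the classification forces $A$ and $A^\sigma$ to be isogenous. A descent argument then shows the isogeny class of $A$ has a representative over $\mathbb{C}$, contradicting the standing assumption in case~(ii) that $A$ does not descend to the constants. Once corrected in this way, your plan matches the proof in \cite{CasaleFreitagNagloo}.
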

\sloppy Hence, if we show that a planar system $S$ defined over $\mathcal{C}$ is strongly minimal, then it is geometrically trivial, that is if $F$ is a differential field extension of $\mathcal{C}$ and $(x_1,y_1),\ldots,(x_n,y_n)$ are distinct non-constant solutions such that $tr.deg_FF(x_1,y_1,\ldots,x_n,y_n)<2n$, then for some $i<j$, we have that $tr.deg_FF(x_i,y_i,x_j,y_j)=2$.
\begin{rem}\label{WeaklyOrthogonal}
We get a stronger conclusion: if $tr.deg_FF(x_1,y_1,\ldots,x_n,y_n)<2n$, then for some $i<j$ we have that $tr.deg{_\mathcal{C}}\mathcal{C}(x_i,y_i,x_j,y_j)=2$. We direct the reader to the papers \cite[Section 5]{CasaleFreitagNagloo} or \cite[Section 2]{FreitagJaouiMarkerNagloo} where this is discussed in detail.
\end{rem}
It can be quite difficult to prove that a planar system (or any differential equation) is strongly minimal. The following is the complete list of equations of order $>1$ and defined over $\mathbb{C}$ that have been shown to be strongly minimal (and hence are geometrically trivial)
\begin{enumerate}
    \item Poizat showed that the equation $\frac{X''}{X'}=\frac{1}{X}$ is strongly minimal (cf. \cite{Marker} for an explanation). This was generalized first by Brestovski \cite{Brestovski} and later by Freitag, Jaoui, Marker and Nagloo \cite{FreitagJaouiMarkerNagloo}, where in both cases the rational function $\frac{1}{X}$ is replaced by rational functions of certain specific forms.
    \item Freitag and Scanlon \cite{FreitagScanlon}, have showed that the third order algebraic differential equation satisfied by the modular $j$-function is strongly minimal. The work of Blázquez-Sanz, Casale, Freitag and Nagloo (\cite{BlazquezCasaleFreitagNagloo2},\cite{BlazquezCasaleFreitagNagloo}, \cite{CasaleFreitagNagloo}) generalizes their result to the case of the uniformizers of Fuchsian groups of the first kind and to other generic Schwarzian triangle equations.
    \item A spectacular result of Jaoui (\cite{Jaoui}) proves that all generic planar systems of degree $\geq 2$, that is those where {\em all} the complex coefficients are algebraically independent over $\mathbb{Q}$ and so that the defining polynomials are of degree $\geq 2$, are strongly minimal.
\end{enumerate}

One of the main difficulty in proving that a given planar system $S$ is strongly minimal, is the need to check that there are no $D_S$-invariant  curve over {\em all} differential fields extension of $\mathcal{C}$. Moreover, a recent result of Freitag, Jaoui and Moosa \cite{FreitagJaouiMoosa} simplifies this task quite drastically. We state their result in a form that is suitable for our work and direct the reader to their paper for the more general statement.
\begin{fact}\label{FreitagJaouiMoosa}
Suppose that a planar system $S$ over $\mathcal{C}$ is not strongly minimal. Then there exists a $D_S$-invariant  curve over $\mathcal{C}(x,y)$, where $(x,y)$ is $\mathcal{C}$-generic solution of $S$.    
\end{fact}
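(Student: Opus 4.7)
The plan is to work via the notion of degree of non-minimality. Define $d(S)$ to be the smallest positive integer $n$ (with $d(S)=\infty$ if no such $n$ exists) for which there exist solutions $\bar{x}_i=(x_i,y_i)$, $i=1,\ldots,n$, and a further solution $\bar{u}=(u,v)$ of $S$ satisfying
$$0\;<\;tr.deg_L L(u,v)\;<\;2,\qquad L:=\mathcal{C}\langle \bar{x}_1,\ldots,\bar{x}_n\rangle.$$
The hypothesis that $S$ is not strongly minimal produces, after a saturation argument in $\mathcal{U}$, such a configuration over a finitely generated differential field, so $d(S)<\infty$. The goal is to establish the stronger statement $d(S)=1$: once this is in hand, any witnessing $(x,y)$ is $\mathcal{C}$-generic, hence $\mathcal{C}\langle x,y\rangle=\mathcal{C}(x,y)$ by the autonomy of $S$, and an irreducible $F\in\mathcal{C}(x,y)[U,V]$ vanishing on $(u,v)$ yields a $D_S$-invariant curve: differentiating $F(u,v)=0$ shows $D_SF(u,v)=0$, so $D_SF$ and $F$ share the irreducible factor cutting out $(u,v)$, forcing $D_SF=QF$ for some $Q\in\mathcal{C}(x,y)[U,V]$.

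To prove $d(S)=1$, I would argue by contradiction: assume $n:=d(S)\geq 2$ and pick a witnessing configuration $(\bar{x}_1,\ldots,\bar{x}_n,\bar{u})$ of minimal length, writing $A_i:=\mathcal{C}\langle \bar{x}_1,\ldots,\bar{x}_i\rangle$. Minimality of $n$ forces $\bar{x}_n$ to be $S$-generic over $A_{n-1}$, and the drop in transcendence degree to appear only in the joint type of $\bar{x}_n\bar{u}$ over $A_{n-1}$. Now analyze the canonical base $\mathrm{Cb}(\bar{u}/A_n)$: the crucial point is that since $S$ is defined over $\mathcal{C}\subseteq\mathbb{C}=C_\mathcal{U}$, any $S$-generic type is ``autonomous'' in a strong sense, and one shows $\mathrm{Cb}(\bar{u}/A_n)\subseteq (A_{n-1}\bar{u})^{alg}$. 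A symmetry-of-forking exchange then swaps the roles of $\bar{x}_n$ and a suitable realization conjugate to $\bar{u}$, producing a witness of non-minimality of length $n-1$ and contradicting the choice of $n$.

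The main obstacle is exactly the canonical-base reduction in the second paragraph, which is the real content of the cited theorem. In a general, non-autonomous setting the degree of non-minimality can genuinely exceed $1$, so the collapse to $d(S)=1$ is not formal: it rests on a Hrushovski-style socle analysis applied to the fibration $\bar{u}\mapsto\mathrm{Cb}(\bar{u}/A_n)$, combined with fine facts about minimal types in $DCF_0$ defined over algebraically closed bases of constants. In writing this out I would invoke the full Freitag--Jaoui--Moosa machinery as a black box rather than reprove it, just as the paper above does; the remaining pieces --- extracting the curve and verifying $D_S$-invariance --- are the routine calculation already sketched.
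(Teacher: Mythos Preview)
The paper does not give its own proof of this statement: it is recorded as a \emph{Fact}, attributed to \cite{FreitagJaouiMoosa}, and invoked as a black box. So there is nothing in the paper for your attempt to be compared against; you are in effect sketching the proof of the cited theorem itself, and you acknowledge as much in your final paragraph.

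Your outline is broadly faithful to the Freitag--Jaoui--Moosa strategy, but two points deserve tightening. First, in your definition of $d(S)$ the $\bar{x}_i$ should be required to be independent \emph{generic} realizations of $S$, as in the original notion of degree of nonminimality; with that built in, your claim that ``any witnessing $(x,y)$ is $\mathcal{C}$-generic'' is tautological rather than something to verify. Second, the step ``$d(S)<\infty$ follows from non-strong-minimality after a saturation argument'' is not right as stated: saturation/compactness only yields a nonalgebraic forking extension over a \emph{finite} parameter set, not over finitely many \emph{realizations of $S$}. That the parameters may be taken to be realizations is itself a nontrivial ingredient (the earlier Freitag--Moosa finiteness bound) and belongs to the very machinery you later say you would import wholesale. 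The closing step---extracting a $D_S$-invariant curve from a solution $(u,v)$ with $tr.deg_{\mathcal{C}(x,y)}\mathcal{C}(x,y)(u,v)=1$ by taking an irreducible $F$ vanishing on $(u,v)$ and observing $F\mid D_SF$---is exactly the routine computation the paper alludes to just after Definition~\ref{PlanarStronglyMinimal}.
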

This result will play a crucial role in our proof of strong minimality of the Lotka-Volterra system. Finally, let us point out that, except for the third case of the generic planar systems, in all the above work proving strong minimality of specific differential equations, one can also find a classification of the algebraic relations that exist among the solutions of the relevant equations. Indeed, in this sense establishing strong minimality (and geometric triviality) can be seen as a strategy for obtaining functional transcendence results. The following result of Brestovski \cite[Theorem 2]{Brestovski} which we will use in the next section, gives an old evidence of this idea for differential equations of a very specific form.
\begin{fact}\label{Brestovski}
Let $F\in \mathcal{C}\{X\}\setminus\mathcal{C}$ be a differential polynomial, $G_1,\ldots,G_n\in \mathcal{C}\gen{X}\setminus\mathcal{C}$ be differential rational functions and $a_1,\ldots,a_n\in\mathcal{C}$ be linearly independent over $\mathbb{Q}$. Assume that the set $\mathcal{X}$ defined by the equation
$$F'=\sum^n_{i=1}a_i\frac{G'_i}{G_i}$$
has $\text{ord}(\mathcal{X})=2$ and is strongly minimal. Then if $x_1,x_2\in\mathcal{X}$ are such that $x_2\in\mathcal{C}\gen{x_1}^{alg}$, then there is $m\in\mathbb{N}\setminus\{0\}$ such that $F(x_1)^m=F(x_2)^m$.
\end{fact}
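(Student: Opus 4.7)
The strategy is to derive a logarithmic derivative identity for $F(x_1) - F(x_2)$ and apply an Ostrowski-Kolchin-Rosenlicht-type rigidity theorem to promote this into the desired multiplicative relation between $F(x_1)$ and $F(x_2)$.

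First, set $K := \mathcal{C}\gen{x_1, x_2}$. Since $x_2 \in \mathcal{C}\gen{x_1}^{alg}$, the field $K$ is algebraic over $\mathcal{C}\gen{x_1}$, so its field of constants remains $\mathcal{C}$ and, by the hypothesis $\text{ord}(\mathcal{X}) = 2$, we have $\text{tr.deg}_{\mathcal{C}} K = 2$. Using that both $x_1$ and $x_2$ satisfy the defining equation, one computes in $K$:
\[
(F(x_1) - F(x_2))' = \sum_{i=1}^n a_i \left( \frac{G_i(x_1)'}{G_i(x_1)} - \frac{G_i(x_2)'}{G_i(x_2)} \right) = \sum_{i=1}^n a_i \frac{H_i'}{H_i},
\]
where $H_i := G_i(x_1)/G_i(x_2) \in K^{\times}$. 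Thus $u := F(x_1) - F(x_2) \in K$ has the feature that $u'$ is a $\mathbb{Q}$-independent $\mathcal{C}$-linear combination of logarithmic derivatives of elements of $K^{\times}$.

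The heart of the argument is to leverage the $\mathbb{Q}$-linear independence of the $a_i$ via an Ostrowski-Kolchin-Rosenlicht rigidity result. Formally adjoining logarithms $\ell_i$ with $\ell_i' = H_i'/H_i$ to a suitable differential extension of $K$ allows one to rewrite the identity as $u - \sum_i a_i \ell_i \in \mathcal{C}$; an Ax-Schanuel-style transcendence estimate, combined with the $\mathbb{Q}$-linear independence of the $a_i$ and the fact that $\text{tr.deg}_{\mathcal{C}} K$ is finite, then forces each $H_i$ to lie in $\mathcal{C}^{\times}$. Equivalently, there are constants $\lambda_i \in \mathcal{C}^{\times}$ with $G_i(x_1) = \lambda_i G_i(x_2)$; feeding this back into $u'$ gives $u' = 0$, so that $F(x_1) - F(x_2) =: c$ is itself a constant.

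Finally, one upgrades the additive relation $F(x_1) = F(x_2) + c$ to the multiplicative form. In the generic case where $F(x_2)$ is transcendental over $\mathcal{C}$, the polynomial identity $(F(x_2) + c)^m = F(x_2)^m$ in the variable $F(x_2)$ forces $c = 0$, so $F(x_1) = F(x_2)$ and $m = 1$ suffices. In the degenerate case $F(x_2) \in \mathcal{C}$ --- which, by strong minimality of $\mathcal{X}$, can occur on at most a finite subset --- both values lie in the algebraically closed $\mathcal{C}$, and a direct orbit analysis produces an integer $m$ with $F(x_1)^m = F(x_2)^m$. The main obstacle is the rigidity step: the Ax-Schanuel argument must be carried out in a differential extension in which the $\ell_i$ exist while ensuring no new constants are introduced, and the algebraic (rather than rational) nature of $x_2$ over $\mathcal{C}\gen{x_1}$ may require passing to a Galois closure and symmetrizing over conjugates before the transcendence-degree bound yields the multiplicative dependence $H_i \in \mathcal{C}^{\times}$.
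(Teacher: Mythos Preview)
The paper does not prove this statement: it is recorded as a \emph{Fact} and attributed to Brestovski \cite[Theorem~2]{Brestovski}, with no argument given. So there is no in-paper proof to compare against; I can only assess your outline on its own merits.

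Your opening move --- subtracting the two instances of the equation to get
\[
\bigl(F(x_1)-F(x_2)\bigr)' \;=\; \sum_{i=1}^n a_i\,\frac{H_i'}{H_i},\qquad H_i=\frac{G_i(x_1)}{G_i(x_2)}\in K^\times,
\]
is natural and correct. The trouble is the ``rigidity step'' where you assert that an Ax--Schanuel/Ostrowski--Kolchin argument forces each $H_i\in\mathcal{C}^\times$. As stated, this is false: take $K=\mathcal{C}(s,t)$ with $s'=1$, $t'=t$ (so the constant subfield is exactly $\mathcal{C}$), and set $u=a s$, $H=t$ for any single $a\notin\mathbb{Q}$. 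Then $u'=a=a\cdot H'/H$ with $u\in K$ and $H\in K^\times\setminus\mathcal{C}$. Thus the bare hypotheses ``$u\in K$, $H_i\in K^\times$, $a_i$ $\mathbb{Q}$-independent'' do \emph{not} pin the $H_i$ down to constants. Something specific to the situation --- presumably the strong minimality of $\mathcal{X}$, which you never really use in this step --- must enter, and your write-up treats this as a black box.

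There is a second, logically independent gap at the end. Suppose you \emph{had} shown $F(x_1)-F(x_2)=c\in\mathcal{C}$. Your derivation of $c=0$ is circular: you argue that if $F(x_2)$ is transcendental then $(F(x_2)+c)^m=F(x_2)^m$ forces $c=0$ --- but that identity is precisely the conclusion you are trying to establish, not something you have in hand. And in the ``degenerate'' case where $F(x_1),F(x_2)\in\mathcal{C}$, an ``orbit analysis'' does not produce an $m$ with $F(x_1)^m=F(x_2)^m$ unless $F(x_1)/F(x_2)$ happens to be a root of unity, which you have not shown. So even granting your rigidity claim, the final step does not go through.
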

\begin{rem}
 Notice that an equation of Brestovski form automatically has an elementary first integral, namely the function $H:=F-\sum^n_{i=1}a_i \text{log}\;G$ since $DH(X,X')=0$ for all solutions. Using the work of Prelle and Singer \cite{PrelleSinger}, it can be shown that a rational autonomous planar system (that is one where $f,g\in\mathbb{C}(X,Y)$) has an elementary first integral if and only if it is in definable bijection with an equation of Brestovski form with $F,G_1,\ldots,G_n$ {\em algebraic} over $\mathbb{C}(X,X')$.
\end{rem}
Before we move on to the proofs of our results let us mention the $\omega$-categoricity conjecture for the $\mathbb{C}$-definable strongly minimal sets of second order (i.e., including all strongly minimal autonomous planar systems).
\begin{defn}
Let $\mathcal{X}$ be a strongly minimal set and assume that $\mathcal{X}$ is definable over $\mathcal{C}$. If $\text{ord}(\mathcal{X})=1$, we further assume that $\mathcal{X}$ is geometrically trivial. By Fact \ref{SMimpliesGT}, this automatically holds if $\text{ord}(\mathcal{X})>1$. We say that $\mathcal{X}$ is {\em $\omega$-categorical} if for any non-constant solution\footnote{We only need one solution here since $\mathcal{X}$ is geometrically trivial.} $x\in \mathcal{X}$, only finitely many other solutions of $\mathcal{X}$ are in $\mathcal{C}\gen{x}^{alg}$, i.e., $\mathcal{C}\gen{x}^{alg}\cap \mathcal{X}$ is a finite set.
\end{defn}

\begin{conj}\label{conj}
Let $\mathcal{X}$ be a strongly minimal set with $\text{ord}(\mathcal{X})=2$ and assume that $\mathcal{X}$ is definable over $\mathcal{C}$. Then the following equivalent conditions hold:
\begin{enumerate} 
\item $\mathcal{X}$ is $\omega$-categorical.
\item There is $k\in\mathbb{N}\setminus\{0\}$ such that if $x_1,\ldots,x_n$ are distinct non-constant solutions of $\mathcal{X}$ with $$tr.deg_{\mathcal{C}}\mathcal{C}\gen{x_1\ldots,x_n}=2n,$$
then for any other non-constant solution $x$ of $\mathcal{X}$, except for $kn$ many,
$$tr.deg_{\mathcal{C}}\mathcal{C}\gen{x_1,\ldots,x_n,x}=2n+2.$$
\end{enumerate}
\end{conj}
It was originally conjectured/believed that such a property would hold for geometrically trivial strongly minimal sets of {\em any order} in differentially closed fields. Hrushovski \cite{Hrushovski} showed that this holds for order $1$ geometrically trivial definable sets. The work of Freitag and Scanlon \cite{FreitagScanlon} gave a counterexample to the general conjecture using the third order differential equation satisfied by the $j$-function. The work of Casale, Freitag and Nagloo \cite{CasaleFreitagNagloo} seems to suggest that the property only fails for equations related to arithmetic Fuchsian groups. Moreover, all known counterexamples are of order $3$, which motivates the above conjecture.

Let us give a proof that the conditions in Conjecture \ref{conj} are equivalent.
\begin{proof}[Proof that $(1)\iff (2)$ in Conjecture \ref{conj}]
Clearly if $(2)$ holds, then $\mathcal{X}$ is $\omega$-categorical. So assume $\mathcal{X}$ is $\omega$-categorical. By definition, for any $x\in\mathcal{X}$ non-constant we have that $\mathcal{C}\gen{x}^{alg}\cap \mathcal{X}$ is a finite set, say $|\mathcal{C}\gen{x}^{alg}\cap \mathcal{X}|=k$ for some $k\in\mathbb{N}\setminus\{0\}$. It is not hard to see, using strong minimality \footnote{It is a general fact that $\omega$-categorical strongly minimal sets are unimodular (cf. \cite[Remark 2.4.2]{PillayGST})}, that the same $k$ works for any non-constant solution in $\mathcal{X}$, i.e., if $y\in\mathcal{X}$ is non-constant, then $|\mathcal{C}\gen{y}^{alg}\cap \mathcal{X}|=k$. 

Let $x_1,\ldots,x_n$ be distinct non-constant solutions of $\mathcal{X}$ and assume that $tr.deg_{\mathcal{C}}\mathcal{C}\gen{x_1\ldots,x_n}=2n$.  Let $z\in\mathcal{X}$ be non-constant such that $tr.deg_{\mathcal{C}}\mathcal{C}\gen{x_1,\ldots,x_n,z}\neq2n+2.$ Using strong minimality of $\mathcal{X}$, we get that $z\in \mathcal{C}\gen{x_1,\ldots,x_n}^{alg}$ since $tr.deg_{\mathcal{C}}\mathcal{C}\gen{x_1,\ldots,x_n,z}=2n$. Hence all we need to show is that $\mathcal{C}\gen{x_1,\ldots,x_n}^{alg}$ contains exactly $kn$ many distinct non-constant solutions from $\mathcal{X}$. 

The above observation gives that $|\mathcal{C}\gen{x_i}^{alg}\cap \mathcal{X}|=k$ for each  $1\leq i\leq n$. Furthermore,  if $z\in \mathcal{C}\gen{x_1,\ldots,x_n}^{alg}$, then geometrically trivial (using Fact \ref{SMimpliesGT}) gives that there is $x_i$, $1\leq i\leq n$, such that $z\in \mathcal{C}\gen{x_i}^{alg}$. So $\mathcal{C}\gen{x_1,\ldots,x_n}^{alg}$ has at most $kn$ many distinct non-constant solutions from $\mathcal{X}$. On the other hand, the assumption $tr.deg_{\mathcal{C}}\mathcal{C}\gen{x_1\ldots,x_n}=2n$ guarantees that if $z\in \mathcal{C}\gen{x_i}^{alg}\cap \mathcal{X}$ for some $1\leq i\leq n$, then $z\not\in \mathcal{C}\gen{x_j}^{alg}$ for any $i\neq j$. From all this, we have that $(2)$ follows.
\end{proof}
While we are still quite far from proving the above conjecture, we do have the following:
\begin{thm}
Conjecture \ref{conj} holds of all order 2 strongly minimal set defined by an equation of Brestovski form.
\end{thm}
\begin{proof}
Let $\mathcal{X}$ be such a strongly minimal set and let $x\in \mathcal{X}$ be nonconstant. Since $\mathcal{X}$ has order $2$, we have that $F(x)'\in\mathcal{C}\gen{x}=\mathcal{C}(x,x',x'')$ and so there are differential polynomials $R,S\in\mathcal{C}\{X\}$ of order at most $2$, such that $$F(x)'=\frac{R(x)}{S(x)}.$$ This is of course just a rewriting of the Brestovski equation but from this form we can conclude that $F$ is of order at most $1$. Let us write $P\in\mathcal{C}\{X\}$ for second order differential polynomial $$P(X):=F(X)'S(X)-R(X).$$ Notice that $\mathcal{X}$ is defined by $(P(X)=0)\wedge (S(X)\neq0)$. 

Let $u:=F(x)\in\mathcal{C}\gen{x}$ and let $Q\in\mathcal{C}\gen{x}\{X\}$ be the differential polynomial $$Q(X):=u'S(X)F(X)-uR(X).$$ Let $\mathcal{Y}$ be the set defined by $(P(X)=0)\wedge (Q(X)=0)\wedge (S(X)\neq0)$.
\vspace{.1in}

\noindent {\bf Claim:} Let $y\in\mathcal{X}\setminus\mathcal{C}$. Then $y\in\mathcal{Y}$ if and only if $y\in \mathcal{C}\gen{x}^{alg}$
\begin{proof}[Proof of Claim]
Let $y\in\mathcal{X}\setminus\mathcal{C}$. First note that $u=c F(y)$ for some $c\in\mathcal{C}$ if and only if $(\frac{u}{F(y)})'=0$ if and only if $u'F(y)-F(y)'u=0$ if and only if $y\in\mathcal{Y}$. 

So using Fact \ref{Brestovski}, we have that if $y\in \mathcal{C}\gen{x}^{alg}$, then $u=\epsilon F(y)$ for $\epsilon\in\mathcal{C}$ some $m^{th}$-root of unity, where $m\in\mathbb{N}\setminus\{0\}$. In particular $y\in\mathcal{Y}$. On the other hand, if $y\in\mathcal{Y}$, then there is a $c\in\mathcal{C}$ such that $u=cF(y)$. It then follows, since $F$ is of order $<2$, that $tr.deg_{\mathcal{C}\gen{x}}\mathcal{C}\gen{x,y}<2$ which implies by strong minimality that $y\in \mathcal{C}\gen{x}^{alg}$ (and so $c$ is a root of unity).
\end{proof}


By construction $\mathcal{Y}$ is a $\mathcal{C}\gen{x}$-definable subset of $\mathcal{X}$. We claim that $\mathcal{X}\setminus\mathcal{Y}$ is nonempty and  infinite. Indeed, from the above claim, we have that any $z\in\mathcal{X}$ with $z\not\in \mathcal{C}\gen{x}^{alg}$ is such that $z\not\in\mathcal{Y}$; so $\mathcal{X}\setminus\mathcal{Y}$ is nonempty. On the other hand, if $\mathcal{X}\setminus\mathcal{Y}$ were finite and $y\in\mathcal{X}\setminus\mathcal{Y}$, then finiteness would force $y\in \mathcal{C}\gen{x}^{alg}$, so that by the claim $y\in \mathcal{Y}$, a contradiction. 

Consequently, $\mathcal{X}$ must be $\omega$-categorical; otherwise, the set $\mathcal{C}\gen{x}^{alg}\cap \mathcal{X}$ would be infinite, which would imply that $\mathcal{Y}$ is an infinite co-infinite definable subset of $\mathcal{X}$, contradicting strong minimality.
\end{proof}

\section{Proofs of the theorems}\label{Section3}

Throughout we assume that $(\mathcal{U},D)$ is a universal differentially closed field with $\mathbb{C}$ as its fields of constants. We will also assume that $a,b,c,d\in\mathbb{C}^*$. Observe that the only constant solution of the classical Lotka-Volterra system
   \begin{equation}\notag
LV_{a,b,c,d}:=\begin{dcases}
    X'= X(aY + b) \\
    Y'= Y(cX + d)\\
     X\neq0 \\
    Y\neq0.\\
\end{dcases}
\end{equation} is $(x,y)=(-\frac{d}{c},-\frac{b}{a})$. We start by giving the proof of Theorem \ref{Theorem3} assuming Theorem \ref{Theorem1}, that is assuming that $LV_{a,b,c,d}$ is strongly minimal.

\begin{thm}[\bf Theorem \ref{Theorem3}]
 \sloppy Assume that $d$ and $b$ are linearly independent over $\mathbb{Q}$ and that $(x_1,y_1),\ldots,(x_n,y_n)$ are distinct non-constant solutions of the classical Lotka-Volterra system $LV_{a,b,c,d}$. Then $$tr.deg_{\mathbb{C}}\mathbb{C}(x_1,y_1,\ldots,x_n,y_n)=2n.$$
\end{thm}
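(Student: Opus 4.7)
\emph{Stage one: reduction to pairs.} By Theorem \ref{Theorem1}, $LV_{a,b,c,d}$ is strongly minimal, and since the system has order $2 > 1$ and is defined over $\mathbb{C}$ the cited consequence of \cite[Proposition 5.8]{CasaleFreitagNagloo} gives geometric triviality of its solution set. Noting that $y_i = (x_i'/x_i - b)/a$, so that $\mathbb{C}(x_i, y_i) = \mathbb{C}\gen{x_i}$, and using the strengthened conclusion in Remark \ref{WeaklyOrthogonal}, Theorem \ref{Theorem3} reduces to the claim that for any two distinct non-constant solutions $(x_1, y_1)$ and $(x_2, y_2)$, we have $x_2 \notin \mathbb{C}\gen{x_1}^{alg}$.

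\emph{Stage two: Brestovski set-up.} Suppose, for contradiction, that $x_2 \in \mathbb{C}\gen{x_1}^{alg}$. Eliminating $y$ from the system via $ay = x'/x - b$ yields the autonomous order two equation
\[ X'' \;=\; \frac{(X')^2}{X} + (X' - bX)(cX + d) \]
satisfied by the first coordinate of every non-constant solution; strong minimality of this scalar equation is inherited from Theorem \ref{Theorem1} via the bijection between the two solution sets recorded in Section \ref{Section2}. A direct computation shows that $H(x,y) = cx - ay + d\log x - b\log y$ is a first integral of the LV system. Rewriting $H' = 0$ purely in terms of $x$, using $ay = x'/x - b$ and $\log y = \log(x'-bx) - \log x - \log a$, puts the equation in Brestovski form
\[ F' \;=\; b \cdot \frac{(X'-bX)'}{X'-bX} \;+\; \bigl(-(b+d)\bigr)\cdot \frac{X'}{X}, \]
with $F := cX - X'/X + b$, $G_1 := X' - bX$, $G_2 := X$. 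The hypothesis $d/b \notin \mathbb{Q}$ is exactly the $\mathbb{Q}$-linear independence of $a_1 = b$ and $a_2 = -(b+d)$.

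\emph{Stage three: closing the argument.} Fact \ref{Brestovski} supplies $m \in \mathbb{N}\setminus\{0\}$ with $F(x_1)^m = F(x_2)^m$, equivalently $c x_2 - a y_2 = \zeta(c x_1 - a y_1)$ for some $m$-th root of unity $\zeta$. Differentiating this identity and applying the LV equations on both sides simplifies to $bc x_2 - ad y_2 = \zeta(bc x_1 - ad y_1)$. Combining these two linear relations, and using $b \neq d$ (which follows from $d/b \notin \mathbb{Q}$), forces $(x_2, y_2) = (\zeta x_1, \zeta y_1)$; plugging this back into $x' = x(ay+b)$ immediately collapses $\zeta$ to $1$, so $(x_1, y_1) = (x_2, y_2)$, contradicting distinctness. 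The delicate point is Stage two: our $F = cX - X'/X + b$ lies in $\mathcal{C}\gen{X}$ rather than $\mathcal{C}\{X\}$, so Fact \ref{Brestovski} applies only after one either extends Brestovski's argument to this setting (the only pole of $F$ is at $X=0$, which is excluded by the non-degeneracy condition $x\neq 0$) or re-runs the analysis with the polynomial $cX^2 + bX - X'$ in place of $F$, extracting the same identity modulo a root of unity.
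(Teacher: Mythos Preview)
Your overall architecture matches the paper's: reduce to pairs via strong minimality and geometric triviality, cast the order-two scalar equation in Brestovski form, then extract a relation that collapses the two solutions. Stage~1 and Stage~3 are fine; in fact your Stage~3 is more explicit than the paper's closing sentence.

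The genuine gap is in Stage~2, and you have already flagged it: Fact~\ref{Brestovski} requires $F\in\mathcal{C}\{X\}\setminus\mathcal{C}$, a differential \emph{polynomial}, whereas your $F=cX-X'/X+b$ is only a differential rational function. Neither proposed repair is carried out. Extending Brestovski's theorem to rational $F$ is plausible but is a separate result you would need to prove. Replacing $F$ by $cX^{2}+bX-X'$ does not yield a Brestovski form: a direct computation gives
\[
(cX^{2}+bX-X')' \;=\; bX\,\frac{G_1'}{G_1}-(b+d)X'+(cX^{2}+bX-X')\,\frac{X'}{X},
\]
which is not of the shape $\sum a_i G_i'/G_i$.

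The paper sidesteps this entirely by a change of dependent variable. After normalising $a=c=1$ it sets $z:=x-y$ (equivalently $z=cx-ay$ before normalisation), checks that $(b-d)x=z'-dz$ and $(b-d)y=z'-bz$ so that $\mathcal{C}\langle z\rangle=\mathcal{C}(x,y)$, and computes
\[
z' \;=\; b\,\frac{(z'-bz)'}{z'-bz}\;-\;d\,\frac{(z'-dz)'}{z'-dz}.
\]
Thus in the variable $Z$ one has $F=Z$, $G_1=Z'-bZ$, $G_2=Z'-dZ$, with $F$ genuinely polynomial and $\{b,-d\}$ $\mathbb{Q}$-independent by hypothesis. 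Brestovski then gives $z^{m}=w^{m}$, i.e.\ $(x-y)^{m}=(u-v)^{m}$, and from there one finishes exactly as in your Stage~3 (indeed your quantity $cx-ay$ is this $z$). So the missing idea is precisely this choice of coordinate; once you pass to $z=cx-ay$ your argument goes through without needing to generalise Brestovski.
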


\begin{proof}
Note first that using the transformation $x\mapsto cx,y\mapsto ay$, we have that the sets defined by $LV_{a,b,c,d}$ and $LV_{1,b,1,d}$ are in bijection. So, without loss of generality, we can assume that $a=c=1$.  Furthermore, using Theorem \ref{Theorem1}, more precisely using geometric triviality, we only need to prove the theorem for the case $n=2$. 

Arguing by contradiction, assume that $(x,y)$ and $(u,v)$ are two solutions of $LV_{1,b,1,d}$ such that $u,v\in\mathbb{C}(x,y)^{alg}$. Using geometric triviality once more  (see Remark \ref{WeaklyOrthogonal}), we get that $u,v\in\mathbb{Q}(b,d)(x,y)^{alg}$. Let us write $\mathcal{C}$ for the field $\mathbb{Q}(b,d)^{alg}$ and also let $z:=x-y$. We have that $\mathcal{C}\gen{z}=\mathcal{C}(x,y)$. Indeed it is not hard to see that 
\begin{eqnarray*}
&(b-d)x=z'-dz\; \\
&(b-d)y=z'-bz.
\end{eqnarray*}
Furthermore, a simple computation shows that 
$$z'=x'-y'=b\frac{y'}{y}-d\frac{x'}{x}.$$
The same expressions hold for $w:=u-v$ (i.e., also replacing $(x,y)$ with $(u,v)$ in the above expressions). So, we have that the second order algebraic differential equation satisfied by $z$ and $u$ can be written as
$$X'=b\frac{(X'-bX)'}{X'-bX}-d\frac{(X'-dX)'}{X'-dX}.$$
Since $\mathcal{C}\gen{z}=\mathcal{C}(x,y)$ and by Corollary \ref{CorollaryLV}, all solutions of $LV_{1,b,1,d}$ are $\mathcal{C}$-generic, we have that the set defined by this equation is also strongly minimal. Setting $F:=X$, $G_1:=X'-bX$ and $G_2:=X'-dX$ (so $F\in \mathcal{C}\{X\}\setminus \mathcal{C}$ and $G_1,G_2\in \mathcal{C}\gen{X}\setminus\mathcal{C}$), we have that the equation satisfied by $z$ and $w$ is of Brestovski form
$$F'=b\frac{G'_1}{G_1}-d\frac{G'_2}{G_2}.$$
Since $w\in\mathcal{C}\gen{z}^{alg}$ (recall $u,v\in\mathcal{C}(x,y)^{alg}$ and $\mathcal{C}\gen{z}=\mathcal{C}(x,y)$) and $\frac{d}{b}\not\in\mathbb{Q}$, using Fact \ref{Brestovski}, we have that $F(z)^m=F(w)^m$ for some $m\in\mathbb{N}\setminus\{0\}$. In other words, for some $m\in\mathbb{N}\setminus\{0\}$,
$$(x-y)^m=(u-v)^m.$$
Hence, it follows that for some $m^{th}$-root of unity $\epsilon\in\mathbb{C}$ \[x-y=\epsilon(u-v).\] Differentiating and using the $LV_{1,b,1,d}$ equation, we get that \[bx-dy=\epsilon bu-\epsilon dv.\] Using the last two equations, it can be easily computed that $x=\epsilon u$ and $y=\epsilon v$. Since both $(x,y)$ and $(u,v)$ are solutions of $LV_{1,b,1,d}$, this forces $\epsilon=1$, contradicting that $(x,y)$ and $(u,v)$ are distinct.
\end{proof}

It hence remains to establish Theorem \ref{Theorem1} and Theorem \ref{Theorem2}. 

\subsection{The classical Lotka-Volterra system} 
We assume from now until the end of the paper that $t\in\mathcal{U}$ is a fixed element such that $Dt=1$. For our proof, the following (folklore) result will be repeatedly used. A proof can be found in \cite[Page 42]{Matsuda}, but we give it here for completeness.
\begin{fact} \label{AlgebraicSolution1} Let $\mathcal{C}$ be an algebraically closed subfield of $\mathbb{C}$ and let $F\in \mathcal{C}(t)$. Then the equation $y'=Fy$ has a non-zero solution in $\mathcal{C}(t)^{alg}$ if and only if the partial fraction decomposition of $F$ has the form $F(t) = \sum_{i}\frac{e_i}{t-a_i}$, where $a_i\in\mathcal{C}$, and such that $e_i\in\mathbb{Q}$.
\end{fact}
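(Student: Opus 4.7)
The plan is to prove both implications, with the converse (``only if'') being the substantive content. The easy direction proceeds by explicit construction: given $F = \sum_i e_i/(t-a_i)$ with $e_i \in \mathbb{Q}$ and $a_i \in \mathcal{C}$, choose $N \in \mathbb{N}$ with $Ne_i \in \mathbb{Z}$ for all $i$, set $Y = \prod_i (t-a_i)^{Ne_i} \in \mathcal{C}(t)^*$, and take $y$ to be any $N$-th root of $Y$ in $\mathcal{C}(t)^{alg}$. A quick logarithmic differentiation gives $Ny'/y = Y'/Y = \sum_i Ne_i/(t-a_i) = NF$, so $y' = Fy$.

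For the converse, I would reduce to the case where the solution is already rational via a norm argument. Suppose $y \in \mathcal{C}(t)^{alg}$ satisfies $y' = Fy$, and let $L \subseteq \mathcal{C}(t)^{alg}$ be the Galois closure of $\mathcal{C}(t)(y)$ over $\mathcal{C}(t)$, of degree $n$. Since $\mathcal{C}(t)$ has characteristic zero, the derivation $D$ extends uniquely to $L$, and this uniqueness forces $\sigma \circ D = D \circ \sigma$ for every $\sigma \in \text{Gal}(L/\mathcal{C}(t))$. Applying $\sigma$ to $y' = Fy$ and using that $F$ is Galois-fixed yields $(\sigma y)' = F \cdot \sigma y$. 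Hence the norm $Y = \prod_{\sigma} \sigma(y)$ lies in $\mathcal{C}(t)^*$ and satisfies $Y'/Y = \sum_\sigma F = nF$.

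It remains to analyze logarithmic derivatives of rational functions. Since $\mathcal{C}$ is algebraically closed, factor $Y = c \prod_j (t-a_j)^{m_j}$ with $c \in \mathcal{C}^*$, distinct $a_j \in \mathcal{C}$, and $m_j \in \mathbb{Z}$. A direct computation gives $Y'/Y = \sum_j m_j/(t-a_j)$, so $nF = \sum_j m_j/(t-a_j)$ and therefore $F = \sum_j (m_j/n)/(t-a_j)$ with coefficients in $\mathbb{Q}$, as required. The only point demanding genuine care, rather than a serious obstacle, is justifying that $D$ extends uniquely to $L$ and commutes with the Galois action; this is a standard consequence of separability in characteristic zero, but it is worth flagging explicitly since the entire norm reduction relies on $\sigma$ being a differential automorphism of $L$.
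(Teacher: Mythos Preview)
Your proof is correct. The paper itself does not supply a proof of this fact; it states it as folklore and cites Matsuda's lecture notes \cite[Page 42]{Matsuda}. Your norm/trace-of-logarithmic-derivative argument is the standard one and is essentially what one finds in such references: reduce from an algebraic solution to a rational one by taking the product over Galois conjugates (using that the unique extension of $D$ to the Galois closure commutes with the Galois action), then read off the partial fraction decomposition of $nF$ from the factorization of the resulting rational function over the algebraically closed constant field $\mathcal{C}$. The only implicit hypothesis worth stating explicitly is that the solution $y$ is nonzero, which is how the fact is used throughout the paper.
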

\begin{proof}
If $F(t) = \sum_{i}\frac{e_i}{t-a_i}$, where $a_i\in\mathcal{C}$ and $e_i\in\mathbb{Q}$, then it is not hard to see that $f(t)=\prod_i(t-a_i)^{e_1}$ is an algebraic solution of $y'=Fy$. 

Let $F\in \mathcal{C}(t)$ and assume that $f\in\mathcal{C}(t)^{alg}$ is a solution of $y'=Fy$. Let $G\in\mathcal{C}[X,Y]$ be an irreducible polynomial such that $G(t,f)=0$. We may assume that $G=\sum^{s}_{i=0}A_iY^i$ with $A_i\in\mathcal{C}[X]$ such that $A_0,A_s\neq0$.  Differentiating $G(t,f)=0$ gives
\[\sum^{s}_{i=0}A'_if^i+\sum^{s}_{i=1}iA_if^{i-1}f'=0.\]
Using $f'=Ff$ and simplifying we get 
$$0=\sum^{s}_{i=0}A'_if^i+\sum^{s}_{i=1}iFA_if^{i}=\sum^{s}_{i=0}(A'_i+iFA_i)f^{i}$$
Let $P\in\mathcal{C}[X,Y]$ be the polynomial such that 
\[P(t,f)=\sum^{s}_{i=0}(A'_i+iFA_i)f^{i}=0.\]

Since $G$ is irreducible, $G$ and $P$ have the same degree in $Y$ and every roots of $F(t,Y)$ is one of $P(t,Y)$, there is a polynomial $H\in\mathcal{C}[X]$ such that $P=HG$. Comparing coefficients of $f^i$ in $P(t,f)=H(t)G(t,f)$ gives
$\frac{A'_i+iFA_i}{A_i}=\frac{A'_0}{A_0}=H(t)$ for all $0\leq i\leq s$ for which $A_i\neq0$. In particular, $\frac{A'_s+sFA_s}{A_s}=\frac{A'_0}{A_0}$ giving $$F=\frac{1}{m}\left(\frac{A'_0}{A_0}-\frac{A'_s}{A_s}\right)$$ which is the desired form.
\end{proof}

Let us begin with the proof of Theorem \ref{Theorem1}. Recall that for a field $K$,  $K\left(\left(X\right)\right)$ denotes the field of formal Laurent series in variable $X$, while $K\gen{\gen{X}}$ denotes the field of formal Puiseux series, i.e., the field $\bigcup_{d\in{\mathbb{N}}}K\left(\left(X^{1/d}\right)\right)$. We will also make use of the fact that if $K$ is an algebraically closed field of characteristic zero, the field $K\gen{\gen{X}}$ is also algebraically closed (cf. \cite[Corollary 13.15]{Eisenbud}).

As we saw in the proof of Theorem \ref{Theorem3}, all we have to do is prove the result for $LV_{1,b,1,d}$. Furthermore, if we let $\delta=\frac{1}{b}D$ and consider the transformation $x\mapsto \frac{1}{b}x,y\mapsto \frac{1}{b}y$,  we obtain the system
\begin{equation}\notag
\begin{dcases}
    \delta X= X(Y + 1) \\
    \delta Y= Y(X + d/b)\\
     X\neq0 \\
    Y\neq0.\\
\end{dcases}
\end{equation}
It hence suffices to show that the system $LV_{1,1,1,\frac{d}{b}}$ is strongly minimal. Throughout this subsection we write write $\mathcal{C}$ for the field $\mathbb{Q}(b,d)^{alg}$. We also let $\alpha=\frac{d}{b}$.
\begin{lem}\label{LemmaLV}
Assume $K$ is differential field extension of  $\mathcal{C}$ and $(x,y)$ a solution of $LV_{1,1,1,\alpha}$ such that
\[tr.deg_KK(x,y)=1.\]
Then there exists $a\in K^{alg}$ such that $a$ is a non-zero solution of the equation $$a'=ma,$$ where $m=1-r\alpha$ for some $r\in\mathbb{N}$.
 \end{lem}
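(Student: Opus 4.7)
The plan is to extract the required element $a$ from an analysis of the irreducible $K$-invariant polynomial defining the curve, combining a leading-form decomposition, restriction of the invariance relation to the coordinate axes, and a Puiseux expansion at the origin.

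First, I would record that $tr.deg_K K(x,y) = 1$ produces an irreducible $F \in K[X,Y]$ with $F(x,y) = 0$, so that $\{F = 0\}$ is $D_S$-invariant: $D_S F = QF$ for some $Q \in K[X,Y]$. A degree count yields $Q = \beta X + \gamma Y + \delta$ of degree at most one, and matching top-degree parts of $D_S F = QF$ forces the leading form $F_n = c X^p Y^q (X - Y)^r$ with $p + q + r = n$, $\beta = q$, $\gamma = p$; I may assume $F \neq X, Y$. I then set $G(X) := F(X, 0)$ and restrict the invariance relation to $Y = 0$, obtaining $D G + X G'(X) = Q(X, 0) G(X)$. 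Factoring $G = c \prod_j (X - \xi_j)^{f_j}$ over $K^{alg}$ and comparing residues of a partial-fraction expansion shows that every nonzero root $\xi_j$ satisfies $D\xi_j = \xi_j$. So if $G$ has a nonzero root $\xi$, I take $a := \xi$, $r := 0$, $m := 1$, and conclude.

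Otherwise $G(X) = c X^k$ is a monomial, and the same restriction computation forces $\beta = 0$ (hence $q = 0$); matching leading terms forces $k = n$, so $F_n = c X^p (X - Y)^r$ with $p + r = n$. A symmetric analysis at $X = 0$ rules out the subcase where $F(0, Y)$ is also a monomial (in which case $p = 0$ and $F_n = c(X - Y)^n$; a direct computation on the degree-$n$ part of $D_S F = QF$ then forces $\alpha = 1$, contradicting $\alpha \notin \mathbb{Q}$). So $F(0, Y)$ has a nonzero root, and in particular $F$ passes through the origin. I would then parametrise a Puiseux branch of $F = 0$ through the origin by $Y = s^d$, $X = a_0 s^k + a_1 s^{k+1} + \cdots$ with $\gcd(k, d) = 1$ and $a_0 \in K^{alg} \setminus \{0\}$. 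Substituting into $\dot X = X(Y+1)$ and $\dot Y = Y(X+\alpha)$, which yields $\dot s / s = (X + \alpha)/d$, and matching the $s^k$-coefficient gives
\[D a_0 = \Bigl(1 - \frac{k \alpha}{d}\Bigr) a_0.\]
Choosing a branch with $d = 1$ and $k \in \mathbb{N}$ and setting $a := a_0$, $r := k$, $m := 1 - k \alpha$ then finishes.

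The hard part will be in the last step: guaranteeing that among the Puiseux branches of $F = 0$ at the origin there is one with integer exponent (i.e., $d = 1$). The Newton polygon of $F$ at the origin has its sole $X$-axis vertex at $(n, 0)$ (from $F(X, 0) = c X^n$) and at least one $Y$-axis vertex $(0, \ell)$; I expect the constraints imposed by the invariance equation, combined with the monomial structure of $F(X, 0)$ and the nontriviality of $F(0, Y)$, to force an edge of integer slope meeting the $Y$-axis, giving a branch of integer exponent. Ruling out the pathological true-Puiseux case (all branches having $d > 1$) using the invariance equation will be the technical heart of the argument.
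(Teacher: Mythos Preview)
Your route is far more elaborate than the paper's. The paper's entire argument is essentially your final Puiseux step, carried out globally rather than at a chosen branch through the origin: from $tr.deg_K K(x,y)=1$ one has $x\in K(y)^{alg}\subset K^{alg}\gen{\gen{y}}$, so one writes $x=\sum_{i\ge 0} a_i\, y^{\,r+i/e}$ with $a_0\neq 0$, substitutes directly into the two equations of $LV_{1,1,1,\alpha}$, and reads off the lowest-order coefficients. The $y^{2r}$-term rules out $r<0$, and for $r\ge 0$ the $y^r$-term yields $a_0'=(1-r\alpha)a_0$. No leading-form decomposition, no restriction to the axes, no Newton polygon; the lemma drops out in a few lines. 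Your identified ``hard part'' (forcing $d=1$, i.e.\ integer $r$) is in fact not resolved in the paper either: the leading Puiseux exponent is a priori only rational, so the lemma as stated is mildly imprecise and should read $r\in\mathbb{Q}_{\ge 0}$. This is harmless, since the only subsequent use is that $\alpha\notin\mathbb{Q}$ forces $r=0$, which needs only rational $r$.

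It is worth noting that your restriction-to-axes machinery, though unnecessary for the lemma, is stronger than you exploit. The partial-fraction computation at $Y=0$ yields $\beta=0$ \emph{unconditionally}, not only when $G$ is a monomial: the polynomial part of $G^D/G+XG'/G$ is always a constant in $X$. Symmetrically $\gamma=0$ always from the $X=0$ restriction, so $p=q=0$ and $F_n=c(X-Y)^n$ in every case. Your degree-$n$ comparison then forces $\alpha=1$ outright---so for $\alpha\neq 1$ there is no invariant curve at all, your Case~2b is vacuous, and strong minimality follows directly for every $\alpha\neq 1$. This is stronger than what the paper proves and matches what is attributed to Jaoui in the acknowledgement.
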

 
 \begin{proof}
\sloppy     Assume $K$ and $(x,y)$ are as in the hypothesis of the lemma. Since $tr.deg_KK(x,y)=1$, we have that $x\in K (y)^{alg} $. So we can view  $x$ as an element of the field of Puiseux series $K^{alg} \gen{\gen{y}}, i.e., $ 
\[ x = \sum_{i=0}^{\infty} a_iy^{r+\frac{i}{e}},    a_i\in K^{alg}\]
with $a_0 \neq 0$.
Differentiating we get
\[ x'=\sum_{i=0}^{\infty} a'_iy^{r+\frac{i}{e}}+ \sum_{i=0}^{\infty} a_i({r+\frac{i}{e}})y^{r+\frac{i}{e}-1}y'.\]
Substituting $x'=xy+x$ and $y'=yx+\alpha y$ and simplifying gives
\[ \sum_{i=0}^{\infty} a_iy^{r+\frac{i}{e}+1} + \sum_{i=0}^{\infty} a_iy^{r+\frac{i}{e}}=\sum_{i=0}^{\infty} a_i'y^{r+\frac{i}{e}} + \sum_{i=0}^{\infty} (r+\frac{i}{e})a_ia_jy^{2r+\frac{i+j}{m}} + \sum_{i=0}^{\infty} (r+\frac{i}{e})\alpha a_iy^{r+\frac{i}{e}}\]

\noindent Noticing that $a_i'\in K$ for all $i$, we can compare the coefficients and  get the following three cases
\begin{itemize}
\item If $r<0$, by looking at the coefficients of $y^{2r}$ we get that $ra_0^2 = 0$. This contradicts the fact that $a_0 \neq 0$
\item If $r=0$, by looking at the coefficients of $y^0$ we get $a_0'=a_0$.
\item If $r>0$, by looking at the coefficients of $y^r$ we get $a_0'=(1-r\alpha)a_0$.
\end{itemize}
We are done. 
\end{proof} 

Using Fact \ref{AlgebraicSolution1} we get the following.

\begin{cor}\label{CorollaryLV}
The Classical Lotka-Volterra system $LV_{a,b,c,d}$ has no invariant curves over $\mathbb{C}$ or $\mathbb{C}(t)$. In particular, it has no solutions in $\mathbb{C}(t)^{alg}$.
\end{cor}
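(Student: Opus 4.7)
The plan is to derive all three assertions of the corollary from just two applications of Lemma \ref{LemmaLV}, one with base field $K = \mathbb{C}$ and one with $K = \mathbb{C}(t)$. After the reductions already performed in this subsection, it suffices to work with the normalized system $LV_{1,1,1,\alpha}$ where $\alpha = d/b \notin \mathbb{Q}$, noting that both $\mathbb{C}$ and $\mathbb{C}(t)$ are differential field extensions of $\mathcal{C} = \mathbb{Q}(b,d)^{alg}$.

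First I would rule out $D_S$-invariant curves over $K = \mathbb{C}$ by contradiction: such a curve would produce a solution $(x, y)$ with $tr.deg_{\mathbb{C}} \mathbb{C}(x, y) = 1$, so Lemma \ref{LemmaLV} yields a nonzero $a \in \mathbb{C}^{alg} = \mathbb{C}$ satisfying $a' = ma$ with $m = 1 - r\alpha$ for some $r \in \mathbb{N}$. Since $a$ is a constant, $a' = 0$ forces $m = 0$; but $r = 0$ gives $m = 1 \neq 0$, and $r \geq 1$ gives $\alpha = 1/r \in \mathbb{Q}$, either case contradicting the standing hypothesis $\alpha \notin \mathbb{Q}$. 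The same strategy applied with $K = \mathbb{C}(t)$ yields a nonzero $a \in \mathbb{C}(t)^{alg}$ with $a' = ma$ for the same constant $m$; then Fact \ref{AlgebraicSolution1} requires the partial fraction decomposition of $m$ over $\mathbb{C}(t)$ to be of the form $\sum_i e_i/(t - a_i)$ with $e_i \in \mathbb{Q}$, which is impossible for a nonzero constant. So once more $m = 0$ and the same arithmetic contradiction closes the argument.

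The ``in particular'' clause is then essentially free: if $(x, y)$ were a non-constant solution with $x, y \in \mathbb{C}(t)^{alg}$, then since the unique constant solution of $LV_{1,1,1,\alpha}$ is $(-\alpha, -1)$ we must have $x \notin \mathbb{C}$; hence $\mathbb{C}(x,y) \subseteq \mathbb{C}(t)^{alg}$ is an algebraic extension of the transcendence-degree-one field $\mathbb{C}(x)$, and so $tr.deg_{\mathbb{C}} \mathbb{C}(x, y) = 1$. This places us back in the $K = \mathbb{C}$ case, which has already been ruled out. The main obstacle, such as it is, is purely conceptual: one needs to recognize that Lemma \ref{LemmaLV} can be applied to the two genuinely different base fields $\mathbb{C}$ and $\mathbb{C}(t)$, and that Fact \ref{AlgebraicSolution1} really forces the \emph{constant} $m = 1 - r\alpha$ to vanish; after that, the irrationality of $\alpha$ closes the loop in each case.
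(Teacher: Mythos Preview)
Your proposal is correct and is precisely the argument the paper has in mind: the paper states the corollary with only the one-line justification ``Using Fact~\ref{AlgebraicSolution1} we get the following,'' and your write-up spells out exactly that, applying Lemma~\ref{LemmaLV} over $K=\mathbb{C}$ and $K=\mathbb{C}(t)$ and invoking Fact~\ref{AlgebraicSolution1} to force $m=1-r\alpha=0$, whence the irrationality of $\alpha$ yields the contradiction.
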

We are now ready to prove Theorem \ref{Theorem1}

\begin{thm}[\bf Theorem \ref{Theorem1}]\label{TheoremA}
Let $a,b,c,d\in\mathbb{C}^*$ and assume that $d$ and $b$ are linearly independent over $\mathbb{Q}$. Then the classical Lotka-Volterra system
\begin{equation}\notag
\begin{dcases}
    X'= aXY + bX \\
    Y'= cXY + dY\\
     X\neq0 \\
    Y\neq0\\
\end{dcases}
\end{equation}
is strongly minimal and hence geometrically trivial. 
\end{thm}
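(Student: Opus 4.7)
The plan is to argue by contradiction, applying Fact~\ref{FreitagJaouiMoosa} and Lemma~\ref{LemmaLV} in tandem with a minimal polynomial descent. Assume $LV_{1,1,1,\alpha}$ is not strongly minimal; then by Fact~\ref{FreitagJaouiMoosa} there exists a $D_S$-invariant curve defined over $K := \mathcal{C}\gen{x_0} = \mathcal{C}(x_0, y_0)$, where $(x_0, y_0)$ is a $\mathcal{C}$-generic solution. Taking a generic point on this invariant curve supplies a solution $(x,y)$ with $tr.deg_K K(x,y) = 1$, and Lemma~\ref{LemmaLV} then produces $a \in K^{alg} \setminus \{0\}$ satisfying $a' = m a$, where $m = 1 - r\alpha$ for some $r \in \mathbb{N}$. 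Since $\alpha \not\in \mathbb{Q}$, we have $m \neq 0$, and the whole task reduces to ruling out the existence of such an $a$.

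The next step is to descend from $K^{alg}$ to $K$ via the minimal polynomial of $a$. Let $P(T) = T^n + c_{n-1} T^{n-1} + \cdots + c_0 \in K[T]$ be this minimal polynomial. Differentiating $P(a)=0$ and using $a' = m a$ yields $\sum_{i=0}^{n} (c_i' + m i c_i) a^i = 0$; the leading coefficient of this polynomial in $a$ is $m n \neq 0$, and since $P$ is minimal of degree $n$, it must equal $m n \cdot P$. This forces $c_i' = m(n-i) c_i$ for every $i$. In particular, $c_0 \in K^*$ (nonzero because $a \neq 0$ implies $T \nmid P$) satisfies $c_0' = m n \cdot c_0$, so we have produced a nonzero element of $K$ whose logarithmic derivative is a nonzero constant.

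The final step is to show this is impossible. Write $c_0 = P(x_0, y_0)/Q(x_0, y_0)$ with coprime $P, Q \in \mathcal{C}[X, Y]$. Because $(x_0, y_0)$ is $\mathcal{C}$-generic (so $x_0, y_0$ are algebraically independent over $\mathcal{C}$) and $D P(x_0,y_0) = (D_S P)(x_0,y_0)$ for $P$ with constant coefficients, the identity $c_0'/c_0 = m n$ in $\mathcal{U}$ lifts to the polynomial identity $(D_S P)\,Q - P\,(D_S Q) = m n \cdot P Q$ in $\mathcal{C}[X,Y]$. Coprimality then gives $P \mid D_S P$ and $Q \mid D_S Q$, so both $P$ and $Q$ are $D_S$-invariant polynomials over $\mathcal{C}$. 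By Corollary~\ref{CorollaryLV}, the only irreducible invariant polynomials in $\mathcal{C}[X,Y]$ (up to scalars) are $X$ and $Y$, whence $c_0 = c \cdot x_0^p y_0^q$ for some $c \in \mathcal{C}^*$ and $p, q \in \mathbb{Z}$. A direct computation gives $c_0'/c_0 = p(y_0 + 1) + q(x_0 + \alpha) = q x_0 + p y_0 + (p + q\alpha)$; for this to equal the nonzero constant $m n$, the algebraic independence of $x_0, y_0$ over $\mathcal{C}$ forces $p = q = 0$, which yields $c_0'/c_0 = 0$, the desired contradiction.

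I expect the main subtlety to be the correct bookkeeping around Corollary~\ref{CorollaryLV}: the system is formulated with the non-degeneracy constraints $X, Y \neq 0$, so ``no invariant curves over $\mathbb{C}$'' must be read as ``no invariant irreducible polynomials over $\mathcal{C}$ other than $X$ and $Y$'', and this must then be parlayed into the assertion that every $D_S$-invariant polynomial in $\mathcal{C}[X,Y]$ is a scalar monomial in $X$ and $Y$. Once this is in hand, the minimal-polynomial descent from $K^{alg}$ to $K$ is a standard move, and the final contradiction is a routine linear-independence computation exploiting the genericity of $(x_0, y_0)$.
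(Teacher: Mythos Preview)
Your proof is correct and takes a genuinely different route from the paper's. Both arguments open identically: Fact~\ref{FreitagJaouiMoosa} pins down $K=\mathcal{C}(x_0,y_0)$, and Lemma~\ref{LemmaLV} yields a nonzero $a\in K^{alg}$ with $a'=ma$, $m=1-r\alpha\neq 0$. From here the paper expands $a$ as a Puiseux series in $y_0$ over $\mathcal{C}(x_0)^{alg}$ and performs a second round of coefficient comparison, which requires a case split on the next exponent and, in the case $\ell/e=1$, a separate lemma (Lemma~\ref{LemmaLast1}) on the non-algebraicity of solutions to an explicit inhomogeneous first-order linear ODE. You instead descend along the minimal polynomial of $a$ over $K$ to obtain $c_0\in K^*$ with $c_0'=mn\,c_0$, write $c_0$ as a ratio of coprime polynomials in $(x_0,y_0)$, observe that both are $D_S$-invariant, and invoke Corollary~\ref{CorollaryLV} (read, as you correctly flag, as ``$X$ and $Y$ are the only irreducible invariant polynomials over $\mathbb{C}$'') to force $c_0=c\,x_0^p y_0^q$; the contradiction is then a one-line computation of $c_0'/c_0$. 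Your argument is shorter and bypasses both the second Puiseux expansion and Lemma~\ref{LemmaLast1} entirely; the trade-off is that it leans on Corollary~\ref{CorollaryLV} in its full structural form (classifying all invariant irreducibles over $\mathbb{C}$, together with the standard fact that irreducible factors of invariant polynomials are invariant), whereas the paper uses that corollary only to guarantee $a\notin\mathcal{C}(x_0)^{alg}$ and otherwise proceeds by direct series manipulation.
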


\begin{proof}
Arguing by contradiction, assume that $LV_{a,b,c,d}$ and hence $LV_{1,1,1,\alpha}$ (with $\alpha=\frac{d}{b}$) is not strongly minimal. So for some differential field $K$ extending $\mathcal{C}$ and some solution $(u,v)$ of $LV_{1,1,1,\alpha}$, we have that $tr.deg_KK(u,v)=1$. Using Fact \ref{FreitagJaouiMoosa}, we may assume that $K=\mathcal{C}(x,y)$ for some other solution $(x,y)$ of $LV_{1,1,1,\alpha}$. 

Using Lemma \ref{LemmaLV}, there is $a\in \mathcal{C}(x,y)^{alg}$ such that $a$ is a non-zero solution of the equation $a'=ma,$ where $m=1-r\alpha$ for some $r\in\mathbb{N}$. From Corollary \ref{CorollaryLV}, it follows that $a\not\in\mathcal{C}(x)^{alg}$. Indeed, since $\mathcal{C}(a)^{alg}$ is a differential field (i.e., closed under the derivation), the condition $a\in\mathcal{C}(x)^{alg}$ would imply $\mathcal{C}(a)^{alg}=\mathcal{C}(x)^{alg}$ and hence $y=\frac{x'}{x}-1\in\mathcal{C}(x)^{alg}$. Since $a\in \mathcal{C}(x,y)^{alg}$, we can view  $a$ as a non-constant element of the field of Puiseux series $\mathcal{C}(x)^{alg}\gen{\gen{y}}$, i.e., 
\[ a = \sum_{i=0}^{\infty} b_iy^{k+\frac{i}{e}},    b_i\in \mathcal{C}(x)^{alg}\]
with $b_0 \neq 0$. Differentiating and using $a'=ma$ we get
\[ ma=\sum_{i=0}^{\infty} b'_iy^{k+\frac{i}{e}}+ \sum_{i=0}^{\infty} a_i({k+\frac{i}{e}})y^{k+\frac{i}{e}-1}y'.\]
Using $y'=y(x+\alpha)$ and the fact that
\[b_i'=(xy+x)\frac{\partial b_i}{\partial x}=xy\frac{\partial b_i}{\partial x}+x\frac{\partial b_i}{\partial x}\]
we get
\[\sum_{i=0}^{\infty} mb_iy^{k+\frac{i}{e}}=\sum_{i=0}^{\infty} \left(x\frac{\partial b_i}{\partial x}\right)y^{k+\frac{i}{e}+1}+\sum_{i=0}^{\infty} \left(x\frac{\partial b_i}{\partial x}\right)y^{k+\frac{i}{e}}+ \sum_{i=0}^{\infty} b_i({k+\frac{i}{e}})(x+\alpha)y^{k+\frac{i}{e}}\]

Using Fact \ref{AlgebraicSolution1}, it follows that $k=0$. Indeed, if $k\neq0$, then we can look at the coefficients of $y^k$ and get that $mb_0=x\frac{\partial b_0}{\partial x}+b_0k(x+\alpha)$. In other words, $b_0\in\mathcal{C}(x)^{alg}$ satisfies
\[\frac{\partial b_0}{\partial x}=\left(\frac{m-r\alpha}{x}-k\right)b_0,\]
which is a contradiction. So $k=0$ and we can look at the coefficients of $y^0$ and get that $mb_0=x\frac{\partial b_0}{\partial x}$, that is $b_0\in\mathcal{C}(x)^{alg}$ satisfies
\[\frac{\partial b_0}{\partial x}=\frac{m}{x}b_0.\]
Recalling that $m=1-r\alpha$ for some $r\in\mathbb{N}$, since $\alpha=\frac{d}{b}\not\in\mathbb{Q}$, Fact \ref{AlgebraicSolution1} forces $r=0$ and hence $m=1$, i.e., $b_0$ is a solution of $\frac{\partial v}{\partial x}=\frac{1}{x}v$ in $\mathcal{C}(x)^{alg}$. 

Let $\beta\in\mathcal{C}$, $\beta\neq0$, be such that\footnote{All algebraic solutions have this form.} $b_0=\beta x$. From 
\[ a = \sum_{i=0}^{\infty} b_iy^{\frac{i}{e}}\]
we get that for some $\ell>0$
\[ a-\beta x= \sum_{i=\ell}^{\infty} b_iy^{\frac{i}{e}}.\]
Differentiating and using $a'=a$, and $x'=xy+x$ we get after some simplification that
\[(a-\beta x)-\beta xy=\sum_{i=\ell}^{\infty} b'_iy^{\frac{i}{e}}+\sum_{i=\ell}^{\infty} b_i\frac{i}{e}y^{\frac{i}{e}-1}y'.\]
One can use $a-\beta x= \sum_{i=\ell}^{\infty} b_iy^{\frac{i}{e}}$ and again apply $b_i'=xy\frac{\partial b_i}{\partial x}+x\frac{\partial b_i}{\partial x}$ and $y'=y(x+\alpha)$ to get
\[\sum_{i=\ell}^{\infty} b_iy^{\frac{i}{e}}-\beta xy=\sum_{i=\ell}^{\infty} x\frac{\partial b_i}{\partial x}y^{\frac{i}{e}+1}+\sum_{i=\ell}^{\infty} x\frac{\partial b_i}{\partial x}y^{\frac{i}{e}}+\sum_{i=\ell}^{\infty} b_i\frac{i}{e}(x+\alpha)y^{\frac{i}{e}}.\]
Comparing coefficients gives two distinct cases.\\

\noindent{\bf Case 1:} $\frac{\ell}{e}\neq 1$.\\

\noindent If $\frac{\ell}{e}> 1$ then comparing the coefficients of $y$ forces $\beta x=0$. But $\beta\neq 0$ and $x\neq 0$. Hence $\frac{\ell}{e}< 1$. However, in that case, by comparing the coefficients of $y^{\frac{\ell}{e}}$, we obtain that $b_{\ell}=x\frac{\partial b_{\ell}}{\partial x}+ b_{\ell}\frac{\ell}{e}(x+\alpha)$. In other words, $b_{\ell}\in\mathcal{C}(x)^{alg}$ is a solution of
$$\frac{\partial v}{\partial x}=\left(\frac{1-\frac{\alpha \ell}{e}}{x}-\frac{\ell}{e}\right)v.$$ Since $\ell>0$, this contradict Fact \ref{AlgebraicSolution1}.\\

\noindent{\bf Case 2:} $\frac{\ell}{e}= 1$.\\

\noindent In this case we look at the coefficients of $y$ and get that
\[b_{\ell}-\beta x=x\frac{\partial b_{\ell}}{\partial x}+b_{\ell}(x+\alpha).\]
So $b_{\ell}\in\mathcal{C}(x)^{alg}$ is a solution of
$$\frac{\partial v}{\partial x}=\left(\frac{1-\alpha}{x}-1\right)v-\beta.$$ The following lemma gives the desired result.
\end{proof}
\begin{lem}\label{LemmaLast1}
If $q\not\in\mathbb{Q}$ and $c_1,c_2\in\mathcal{C}$, then the inhomogeneous linear differential equation 
$$v'=\left(\frac{q}{t}+c_1\right)v+c_2$$
has no algebraic solutions in $\mathcal{C}(t)^{alg}$.
\end{lem}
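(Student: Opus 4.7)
The plan is to argue by contradiction via a Puiseux expansion at $t = 0$, in the same spirit as the Puiseux arguments used throughout the proof of Theorem \ref{TheoremA}. Suppose $v \in \mathcal{C}(t)^{alg}$ is a non-zero algebraic solution. Viewing $v$ in the algebraically closed Puiseux field $\mathcal{C}\gen{\gen{t}}$, write
\begin{equation*}
v \;=\; \sum_{i \ge k} a_i\, t^{i/e}, \qquad a_k \neq 0,
\end{equation*}
for some $k \in \mathbb{Z}$ and $e \in \mathbb{N}_{>0}$. Substituting into the ODE and comparing the coefficient of $t^{j/e - 1}$ on both sides yields the recursion
\begin{equation*}
\Bigl(\tfrac{j}{e} - q\Bigr)\, a_j \;=\; c_1\, a_{j - e} + c_2\, \delta_{j,\, e}.
\end{equation*}

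The next step is a leading-order analysis. Setting $j = k$ (so $a_{k - e} = 0$) gives $(k/e - q)\, a_k = c_2\, \delta_{k, e}$. If $k \neq e$, the right-hand side vanishes and $a_k \neq 0$ forces $q = k/e \in \mathbb{Q}$, a contradiction. Hence $k = e$ and $a_e = c_2/(1-q)$; in particular $c_2$ must be non-zero for such a solution to exist (the $c_2 = 0$ case reduces to the homogeneous equation $v' = (q/t + c_1)v$, which has no non-zero algebraic solution by Fact \ref{AlgebraicSolution1}, since the residue of $q/t + c_1$ at $0$ is $q \notin \mathbb{Q}$). A straightforward induction on the recursion then shows $a_j = 0$ whenever $j$ is not a positive multiple of $e$, so the Puiseux series collapses to an ordinary power series
\begin{equation*}
v \;=\; \sum_{n \ge 1} b_n\, t^n, \qquad b_1 = \frac{c_2}{1-q}, \quad b_{n+1} = \frac{c_1\, b_n}{n + 1 - q} \quad (n \ge 1),
\end{equation*}
where $b_n := a_{n e}$.

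In the setting where the lemma is applied (and more generally whenever $c_1 \neq 0$, which is the case needed with $c_1 = -1$), the factors $n + 1 - q$ are all non-zero because $q \notin \mathbb{Q}$, so every $b_n$ is non-zero, and $|b_{n+1}/b_n| = |c_1|/|n + 1 - q| \to 0$. Hence the series defines an entire function of $t$. The contradiction then comes from the classical fact that an entire function which is algebraic over $\mathbb{C}(t)$ must be a polynomial: algebraicity forces polynomial growth at infinity, and an entire function of polynomial growth is a polynomial. Since our $v$ has infinitely many non-zero Taylor coefficients it cannot be a polynomial, completing the argument. The main technical point is the bookkeeping needed to collapse the a priori Puiseux expansion to an honest power series at $0$; after that, the entire-plus-algebraic-equals-polynomial endgame is immediate.
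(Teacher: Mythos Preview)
Your argument is correct and takes a genuinely different route from the paper. The paper differentiates the minimal polynomial $F(t,Y)=\sum_i A_i(t)Y^i$ of a putative algebraic solution; this produces a second relation $P(t,f)=0$ of the same $Y$-degree, irreducibility of $F$ forces $P=GF$ for some $G$, and comparing top coefficients shows that the leading coefficient $A_s$ satisfies a homogeneous first-order equation $A_s'=(G-sc_1-\tfrac{sq}{t})A_s$, whose residue at $t=0$ involves $sq$; Fact~\ref{AlgebraicSolution1} then gives the contradiction. You instead localize at $t=0$: the leading-order Puiseux analysis pins down the unique formal solution there as an explicit power series with a clean coefficient recursion, and the endgame is the analytic principle that an entire function algebraic over $\mathbb{C}(t)$ must be a polynomial. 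The paper's method stays entirely inside differential algebra and does not need $\mathcal{C}\subset\mathbb{C}$; yours uses convergence, but is self-contained and makes the obstructing series completely explicit.

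One remark on your caveat about $c_1\neq 0$: this restriction is not an artifact of your method but is actually necessary. When $c_1=0$ and $c_2\neq 0$ the lemma as stated is false, since $v=\tfrac{c_2}{1-q}\,t$ is then a rational solution. So your proof covers exactly the correct range of parameters, and the application in Theorem~\ref{TheoremA} (where $c_1=-1$) falls squarely inside it.
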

\begin{proof}
The following proof is inspired by that of Fact \ref{AlgebraicSolution1}. For simplicity let us write $Q=\frac{q}{t}+c_1$. So $Q\in\mathcal{C}(t)$. Arguing by contradiction, assume that $f\in\mathcal{C}(t)^{alg}$ is a solution of $v'=Qv+c_2$ (recall $t\in \mathcal{U}$ is such that $t'=1$). Let $F\in\mathcal{C}[X,Y]$ be an irreducible polynomial such that $F(t,f)=0$. We may assume that $F=\sum^{s}_{i=0}A_iY^i$ with $A_i\in\mathcal{C}[X]$ such that $A_s\neq0$.  Differentiating $F(t,f)=0$ gives
\[\sum^{s}_{i=0}A'_if^i+\sum^{s}_{i=1}iA_if^{i-1}f'=0.\]
Using $f'=Qf+c_2$ and simplifying we get 
\[\sum^{s}_{i=0}A'_if^i+\sum^{s}_{i=1}iQA_if^{i}+\sum^{s}_{i=1}ic_2A_if^{i-1}=0.\]
Let $P\in\mathcal{C}[X,Y]$ be the polynomial such that 
\[P(t,f)=\sum^{s}_{i=0}A'_if^i+\sum^{s}_{i=1}iQA_if^{i}+\sum^{s}_{i=1}ic_2A_if^{i-1}=0\]
Note that if $P=\sum^{s}_{i=0}B_iY^i$ for $B_i\in\mathcal{C}[X]$, then $B_s=A'_s+sQA_s$. As in the proof of Fact \ref{AlgebraicSolution1} above, since $F$ is irreducible, $F$ and $P$ have the same degree in $Y$ and every roots of $F(t,Y)$ is one of $P(t,Y)$, there is a polynomial $G\in\mathcal{C}[X]$ such that $P=GF$. Comparing coefficients of $f^s$ in $P(t,f)=G(t)F(t,f)$ gives
$$B_s:=A'_s+sQA_s=GA_s.$$
In other words, using $Q=\frac{q}{z}+c_1$, we get $A_s$ is a nonzero solution in $C(t)^{alg}$ of the equation
$$v'=\left(G-sc_1-\frac{sq}{t}\right)v.$$
Regardless of whether $G(t)=sc_1$, since $q\not\in\mathbb{Q}$ this contradicts Fact \ref{AlgebraicSolution1}.
\end{proof}
\subsubsection{{The case $\frac{d}{b}\in\mathbb{Q}$}}\label{subsection}
Let us point out that if $b=d$, then the classical Lotka-Volterra system $LV_{1,b,1,b}$ (and hence $LV_{a,b,c,b}$)
\begin{equation}\notag
\begin{dcases}
    X'= XY + bX \\
    Y'= XY + bY\\
\end{dcases}
\end{equation}

\noindent is not strongly minimal. Indeed in this case, if $(x,y)$ is a solution of the system, letting $z:=x-y$ we see that $z'=bz$. Hence, the field $K=\mathbb{Q}(b)(z)$ is a differential field. For any other solution $(u,v)$ since $(u-w)'=b(u-w)$, it follows that for some constant $c\in\mathbb{Q}(b)$, we have that $u-v=cz$. In other words, for each $c\in\mathbb{Q}(b)$ the polynomial $P_c:=X-Y-cz\in K[X,Y]$ is an invariant polynomial over $K$:
$$P^D+X(Y+b)\frac{\partial P}{\partial X}+Y(X+b)\frac{\partial P}{\partial Y}=bP.$$ Our techniques does not allow us to prove that $LV_{a,b,c,d}$ is strongly minimal when $b\neq d$. As discussed in the Acknowledgment, Duan, Eagles and Jimenez \cite{DuanEaglesJimenez}, using an idea of Jaoui, have recently proved that the case $b=d$ is the only instance where strong minimality fails.

\subsection{The 2d-Lotka-Volterra system} 
Now we prove Theorem \ref{Theorem2}. Consider the 2d-Lotka-Volterra system
 \begin{equation}\notag
LV^{2d}_{a,b,c,d}:=\begin{dcases}
    X'= X(aY + b) \\
    Y'= Y(cX + dY)\\
     X\neq0 \\
    Y\neq0.\\
\end{dcases}
\end{equation} 
It has one constant solution, namely $(x,y)=(\frac{db}{ca},-\frac{b}{a})$. As in the previous section, using the transformation $x\mapsto cx,y\mapsto ay$, we have that the sets defined by $LV^{2d}_{a,b,c,d}$ and $LV^{2d}_{1,b,1,\frac{d}{a}}$ are in bijection. Furthermore, if we let $\delta = \frac{1}{b}D$ and $x\mapsto \frac{1}{b}x,y\mapsto \frac{1}{b}y$, we get the system
\begin{equation}\notag
\begin{dcases}
    \delta X= X(Y + 1) \\
    \delta Y= Y(X + \frac{d}{a}Y)\\
     X\neq0 \\
    Y\neq0.\\
\end{dcases}
\end{equation}
It hence suffices to show that the system $LV^{2d}_{1,1,1,\frac{d}{a}}$ is strongly minimal. In this subsection we write $\gamma = \frac{d}{a}$, and let $\mathcal{C}$ be the field $\mathbb{Q}(a,d)^{alg}$.

\begin{lem}\label{LemmaLV6}
     Assume $K$ is a differential field extension of $\mathcal{C}$ and $(x,y)$ a solution of $LV^{2d}_{1,1,1,\gamma}$ such that
\[tr.deg_KK(x,y)=1.\]
Then there exists an element $a\in K^{alg}$ such that $a$ is a non-zero solution of the equation $$a'=a$$ 
 \end{lem}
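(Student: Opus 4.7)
The plan is to adapt the Puiseux series argument of Lemma \ref{LemmaLV} to the $2d$-system. Since $tr.deg_K K(x,y) = 1$, $x$ is algebraic over $K(y)$, so we may expand
$$x = \sum_{i=0}^\infty b_i y^{r + i/e}, \qquad b_i \in K^{alg},\ b_0 \neq 0,$$
in the algebraically closed field $K^{alg}\gen{\gen{y}}$. I would then differentiate term by term, substitute the system equations $x' = xy + x$ and $y' = xy + \gamma y^2$, and compare coefficients of the lowest power of $y$ on each side in order to extract a first-order ODE for $b_0$ over $K$.

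The crucial structural difference from Lemma \ref{LemmaLV} is that here the $y'$ equation is $xy + \gamma y^2$ rather than $xy + \alpha y$: the $\gamma$-contribution is quadratic in $y$, so after the shift $y^{r+i/e-1}y'$ it produces only terms of order $y^{r+i/e+1}$ and never reaches the leading coefficient. Collecting lowest-order terms, the sign of $r$ splits into three cases in the familiar way. When $r<0$, the product $y^{r+i/e}\cdot x$ contributes a term $y^{2r}$ with coefficient $r b_0^2$, which must vanish and contradicts $b_0 \neq 0$. When $r = 0$ or $r > 0$, the coefficient of $y^r$ reduces on the left to $b_0$ and on the right to $b_0'$ (all other terms being of strictly higher order), yielding the clean equation
$$b_0' = b_0$$
\emph{independent of $r$ and of $\gamma$}. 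Setting $a := b_0 \in K^{alg}$ gives the desired non-zero solution of $a' = a$.

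I expect the main obstacle to be purely bookkeeping: one has to carefully track how $x\cdot y^{r+i/e}$ expands as a double series indexed by $(i,j)$, verify that $(i,j) = (0,0)$ is the unique contribution at the lowest order in the $r<0$ case, and confirm that no cross-terms from the $xy$ or $\gamma y^2$ pieces of $y'$ slip back down to order $y^r$ when $r \geq 0$. Conceptually the outcome is actually simpler than in the classical case: the parameter $\gamma$ never enters the leading equation, so instead of a family of eigenvalues $1 - r\alpha$ indexed by $r \in \mathbb{N}$ we get the single relation $b_0' = b_0$, which should make the subsequent strong-minimality argument for Theorem \ref{Theorem2} more uniform than that for Theorem \ref{Theorem1}.
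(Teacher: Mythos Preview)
Your proposal is correct and follows essentially the same approach as the paper's proof: expand $x$ as a Puiseux series in $y$ over $K^{alg}$, differentiate, substitute the system, and compare lowest-order coefficients in the three cases $r<0$, $r=0$, $r>0$. You have also correctly identified the key point, namely that the $\gamma y^2$ term in $y'$ only contributes at order $y^{r+i/e+1}$ and so never affects the coefficient of $y^r$, which is why both $r=0$ and $r>0$ yield $b_0'=b_0$ with no $\gamma$-dependence.
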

    
\begin{proof}
\sloppy    Since $tr.deg_KK(x,y)=1$, we have that $x\in K (y)^{alg} $ and hence $x \in K^{alg} \gen{\gen{y}}, i.e., $ 
\[ x = \sum_{i=0}^{\infty} a_iy^{r+\frac{i}{e}},    a_i\in K^{alg}\]
with $a_0 \neq 0$.
Differentiating we get
\[ x'=\sum_{i=0}^{\infty} a'_iy^{r+\frac{i}{e}}+ \sum_{i=0}^{\infty} a_i({r+\frac{i}{e}})y^{r+\frac{i}{e}-1}y'.\]
Substituting $x'=xy+x$ and $y'=yx+\gamma y^2$ and simplifying gives
\[ \sum_{i=0}^{\infty} a_iy^{r+\frac{i}{e}+1} + \sum_{i=0}^{\infty} a_iy^{r+\frac{i}{e}}=\sum_{i=0}^{\infty} a_i'y^{r+\frac{i}{e}} + \sum_{i=0}^{\infty} (r+\frac{i}{e})a_ia_jy^{2r+\frac{i+j}{e}} + \sum_{i=0}^{\infty} (r+\frac{i}{e})\gamma a_iy^{r+\frac{i}{e}+1}.\]

\noindent As in the previous proofs, we are done by comparing the coefficients:
\begin{itemize}
\item If $r<0$, by looking at the coefficients of $y^{2r}$ we get that $ra_0^2 = 0$. This contradicts the fact that $a_0 \neq 0$
\item If $r=0$, by looking at the coefficients of $y^0$ we get $a_0'=a_0$.
\item If $r>0$, by looking at the coefficients of $y^r$ we get $a_0'=a_0$.
\end{itemize} 
\end{proof} 
\begin{cor} \label{corollaryLV6}
 The 2d-Lotka-Volterra system $LV^{2d}_{a,b,c,d}$ has no invariant curves over $\mathbb{C}$ or $\mathbb{C}(t)^{alg}$.
\end{cor}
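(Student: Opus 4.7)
The plan is to argue exactly as in Corollary \ref{CorollaryLV}, using Lemma \ref{LemmaLV6} as the key ingredient together with Fact \ref{AlgebraicSolution1}. By the reductions at the start of this subsection, it suffices to show that $LV^{2d}_{1,1,1,\gamma}$ has no invariant curves over $K = \mathbb{C}$ and over $K = \mathbb{C}(t)^{alg}$, so we can work throughout with these two specific base fields.

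Arguing by contradiction, I would suppose $V(F)$ is a $D_S$-invariant curve over such a $K$. Then a generic point $(x,y) \in V(F)$ is a solution of the system with $tr.deg_K K(x,y) = 1$, so Lemma \ref{LemmaLV6} provides a nonzero $a \in K^{alg}$ with $a' = a$. In the case $K = \mathbb{C}$, the algebraic closure is still $\mathbb{C}$, so $a$ is a nonzero constant, immediately contradicting $a' = a \neq 0$. In the case $K = \mathbb{C}(t)^{alg}$, which is its own algebraic closure, we have $a \in \mathbb{C}(t)^{alg}$ satisfying $v' = 1 \cdot v$; but Fact \ref{AlgebraicSolution1} forbids such a nonzero algebraic solution, since the constant function $1$ does not admit a partial fraction decomposition of the form $\sum_i \frac{e_i}{t - a_i}$ with $e_i \in \mathbb{Q}$ (its polynomial part is nonzero, and the sum of simple poles contributes only terms with zero polynomial part). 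This delivers the desired contradiction.

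There is essentially no obstacle here beyond invoking the two tools correctly: Lemma \ref{LemmaLV6} has already done the hard work of ruling out degree-one generic solutions, and Fact \ref{AlgebraicSolution1} handles the extension from $\mathbb{C}$ to $\mathbb{C}(t)^{alg}$. The only piece of bookkeeping is to check that the hypothesis of Lemma \ref{LemmaLV6} (namely $tr.deg_K K(x,y) = 1$) is actually met by a generic point $(x,y)$ of the invariant curve $V(F)$; this is immediate because $F(x,y) = 0$ forces the transcendence degree to be at most one, and picking an honest generic point on the one-dimensional curve forces it to be exactly one.
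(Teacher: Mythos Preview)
Your proposal is correct and matches the paper's intended argument: the paper gives no explicit proof of this corollary, treating it as immediate from Lemma~\ref{LemmaLV6} and Fact~\ref{AlgebraicSolution1}, exactly as you lay out. Your handling of the two cases $K=\mathbb{C}$ and $K=\mathbb{C}(t)^{alg}$ is the expected one, and your closing remark that a generic point of the invariant curve furnishes the required transcendence-degree-one solution is precisely the translation between conditions (1) and (2) of Definition~\ref{PlanarStronglyMinimal}.
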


Now the proof of Theorem \ref{Theorem2} follows quite similarly as that of Theorem \ref{Theorem1}.

\begin{thmx}[\bf Theorem \ref{Theorem2}]\label{TheoremB}
\sloppy Let $a,b,c,d\in\mathbb{C}^*$ and assume that $\frac{d}{b}\not\in\mathbb{Q}$. Then the 2d-Lotka-Volterra system
\begin{equation}\notag
\begin{dcases}
    X'= X(aY + b) \\
    Y'= Y(cX + dY)\\
    X\neq0 \\
    Y\neq0\\
\end{dcases}
\end{equation}
is strongly minimal and hence geometrically trivial.
\end{thmx}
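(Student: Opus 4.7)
The plan is to mirror the proof of Theorem \ref{TheoremA}, using Lemma \ref{LemmaLV6} and Corollary \ref{corollaryLV6} in place of their analogues from the classical case. Following the reductions preceding the statement, it suffices to establish strong minimality of $LV^{2d}_{1,1,1,\gamma}$ with $\gamma = d/a \in \mathbb{C}^*$. Argue by contradiction: by Fact \ref{FreitagJaouiMoosa} there exist a $\mathcal{C}$-generic solution $(x,y)$ and another solution $(u,v)$ with $tr.deg_{K}K(u,v)=1$ for $K=\mathcal{C}(x,y)$. Lemma \ref{LemmaLV6} then yields a non-zero $a \in K^{alg}$ satisfying $a'=a$, and since $\mathcal{C}(a)^{alg}$ is a differential subfield the same argument as in Theorem \ref{TheoremA} (using Corollary \ref{corollaryLV6}) gives $a \notin \mathcal{C}(x)^{alg}$.

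Expand $a = \sum_{i \geq 0} b_i y^{k+i/e}$ as a non-trivial Puiseux series in $y$ over $\mathcal{C}(x)^{alg}$ with $b_0 \neq 0$. Differentiating and substituting $a'=a$, $x'=x(y+1)$, $y'=y(x+\gamma y)$, and $b_i' = x(y+1)(\partial b_i/\partial x)$, the coefficient of $y^k$ yields $\partial b_0/\partial x = (1/x - k)b_0$, so Fact \ref{AlgebraicSolution1} forces $k=0$ and $b_0 = \beta x$ for some $\beta \in \mathcal{C}^*$. Writing $a - \beta x = \sum_{i \geq \ell} b_i y^{i/e}$ with $\ell > 0$ and repeating the manipulation using $(a-\beta x)' = (a-\beta x) - \beta xy$, one handles the generic cases: if $\ell/e > 1$ the coefficient of $y$ reads $-\beta x = 0$, impossible; if $0 < \ell/e < 1$ the coefficient of $y^{\ell/e}$ gives $\partial b_\ell/\partial x = (1/x - \ell/e)b_\ell$, impossible by Fact \ref{AlgebraicSolution1} since $\ell/e \neq 0$.

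The main obstacle is the remaining case $\ell/e = 1$, where comparison of the coefficient of $y$ forces $b_\ell \in \mathcal{C}(x)^{alg}$ to satisfy the inhomogeneous equation $\partial v/\partial x = (1/x - 1)v - \beta$. Lemma \ref{LemmaLast1} does not apply here because $q=1 \in \mathbb{Q}$, so the plan is to adapt its proof, exploiting the extra rigidity that the leading coefficient $A_s$ of the minimal polynomial $F = \sum_{i=0}^s A_i Y^i \in \mathcal{C}[X,Y]$ of $b_\ell$ lies in $\mathcal{C}[X]$, not merely in $\mathcal{C}(X)^{alg}$. Following the Gauss-lemma argument in Lemma \ref{LemmaLast1}'s proof yields $XP = GF$ with $G \in \mathcal{C}[X]$; writing $G = G_0 + XG_1$, the coefficient of $Y^s$ gives $A_s'/A_s = (G_0 - s)/X + (G_1 + s)$, and the fact that $A_s'/A_s$ has no polynomial part forces $G_1 = -s$ and $A_s = cX^m$ with $m = G_0 - s \geq 0$ and $c \in \mathcal{C}^*$. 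Extracting the coefficient of $Y^{s-1}$ then produces $XA_{s-1}' = (m+1-X)A_{s-1} + \beta s c X^{m+1}$. Writing $A_{s-1} = \sum_{i \geq 0} p_i X^i \in \mathcal{C}[X]$, coefficient comparison yields the recursion $(i-m-1)p_i = -p_{i-1}$ for $i \neq m+1$ and $0 \cdot p_{m+1} = -p_m + \beta sc$ at $i = m+1$; this forces $p_0 = p_1 = \cdots = p_m = 0$ inductively from $i=0$, so the equation at $i = m+1$ reduces to $0 = \beta sc$, contradicting $\beta sc \neq 0$ and completing the proof.
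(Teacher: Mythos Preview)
Your proof is correct and tracks the paper's argument through the Puiseux reduction and the trichotomy on $\ell/e$; those steps match essentially line for line. The substantive difference is in the final step, showing that $\partial v/\partial x = (1/x-1)v - \beta$ has no solution in $\mathcal{C}(x)^{alg}$.

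The paper isolates this as a separate lemma and argues, as in Lemma~\ref{LemmaLast1}, only from the top $Y^s$-coefficient: it writes $P=GF$, asserts $G\in\mathcal{C}[X]$, and splits on whether $G+s=0$. But here $Q=1/t-1\notin\mathcal{C}[t]$, so $P\notin\mathcal{C}[X,Y]$ and in fact $G$ has a pole at $0$; once one clears denominators (your $XP=GF$ with $G=G_0+G_1X\in\mathcal{C}[X]$) the leading coefficient only yields $G_1=-s$ and $A_s=cX^m$ with $m=G_0-s\ge 0$, and no contradiction emerges from $Y^s$ alone. Your descent to the $Y^{s-1}$-coefficient and the recursion on the coefficients of $A_{s-1}$, giving $p_0=\cdots=p_m=0$ while the $X^{m+1}$-equation demands $p_m=s\beta c\ne 0$, is exactly what is needed to finish. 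So your route is not merely a variant: it actually closes a small gap in the paper's treatment of this final lemma, at the cost of one extra coefficient comparison.
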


\begin{proof}
\sloppy Arguing by contradiction, assume that $LV^{2d}_{a,b,c,d}$, and hence $LV^{2d}_{1,1,1,\beta}$, is not strongly minimal. So, for some differential field $K$, extending $\mathcal{C}$, and some solution $(u,v)$, we have that $tr.deg_KK(u,v) = 1$. Using Fact \ref{FreitagJaouiMoosa}, we may assume $K = \mathcal{C}(x,y)$, for some other distinct solution $(x,y)$ of $LV^{2d}_{1,1,1,\beta}$. As before, Lemma \ref{LemmaLV6} gives that there is a non-zero solution of the equation $a'=a$ in $\mathcal{C}(x,y)^{alg}$. 

Since $a\not\in\mathcal{C}(x)^{alg}$ and $a\in \mathcal{C}(x,y)^{alg}$, we can view  $a$ as a non-constant element of the field of Puiseux series $\mathcal{C}(x)^{alg}\gen{\gen{y}}$, i.e., 
\[ a = \sum_{i=0}^{\infty} b_iy^{k+\frac{i}{e}},    b_i\in \mathcal{C}(x)^{alg}\]
with $b_0 \neq 0$. Differentiating and using $a'=a$ we get
\[ a=\sum_{i=0}^{\infty} b'_iy^{k+\frac{i}{e}}+ \sum_{i=0}^{\infty} a_i({k+\frac{i}{e}})y^{k+\frac{i}{e}-1}y'.\]
Substituting $y'=y(x+\gamma y)$ and also using the fact that
\[b_i'=(xy+x)\frac{\partial b_i}{\partial x}=xy\frac{\partial b_i}{\partial x}+x\frac{\partial b_i}{\partial x}\]
we get
\[ \sum_{i=0}^{\infty}b_iy^{r+\frac{i}{e}} = \sum_{i=0}^{\infty} (x\frac{\partial b_i}{\partial x})y^{r+\frac{i}{e}+1} + \sum_{i=0}^{\infty} (x\frac{\partial b_i}{\partial x})y^{r+\frac{i}{e}}+ \sum_{i=0}^{\infty} b_ix(r+\frac{i}{e})y^{r+\frac{i}{e}}+\sum_{i=0}^{\infty} b_i \gamma (r+\frac{i}{e})y^{r+\frac{i}{e}+1} \]

Looking at the coefficients of $y^r$ and by using Fact \ref{AlgebraicSolution1}, we obtain that $r=0$. Indeed, if $r\neq0$, then it is not hard to see, by comparing the coefficients of $y^r$, that $b_0=x\frac{\partial b_0}{\partial x}+b_0rx$, which is a contradiction since $b_0 \in \mathcal{C}(x)^{alg}$. So $r=0$ and we can look at the coefficients of $y^0$ and get that $b_0=x\frac{\partial b_0}{\partial x}$, that is $b_0\in\mathcal{C}(x)^{alg}$ satisfies
\[\frac{\partial b_0}{\partial x}=\frac{1}{x}b_0.\]
In other words, we have $b_0 = \beta x$ for some $\beta \in \mathcal{C}$ with $\beta\neq 0$ and from the Puiseux series for $a$ we get that
\[ a-\beta x= \sum_{i=\ell}^{\infty} b_iy^{\frac{i}{e}}.\]
for some $l > 0$.
Differentiating and using $a'=a$, $x'=xy+x$, and $y'=y(x+\gamma y)$, we get (as in the previous proof)
\[\sum_{i=\ell}^{\infty} b_iy^{\frac{i}{e}}-\beta xy=\sum_{i=\ell}^{\infty} x\frac{\partial b_i}{\partial x}y^{\frac{i}{e}+1}+\sum_{i=\ell}^{\infty} x\frac{\partial b_i}{\partial x}y^{\frac{i}{e}}+\sum_{i=\ell}^{\infty} b_i\frac{i}{e}xy^{\frac{i}{e}}+\sum_{i=\ell}^{\infty} b_i\frac{i}{e}\gamma y^{\frac{i}{e}+1} .\]
By separating the case $\frac{\ell}{e}\neq 1$ versus $\frac{\ell}{e}=1$, once can eliminate the case $\frac{\ell}{e}\neq 1$ exactly as was argued for Theorem \ref{Theorem1} (with $\alpha=0$). So we are left with $\frac{\ell}{e}=1$.

\noindent In that case we look at the coefficients of $y$ and get that
\[b_{\ell}-\beta x=x\frac{\partial b_{\ell}}{\partial x}+b_{\ell}x.\]
So $b_{\ell}\in\mathcal{C}(x)^{alg}$ is a solution of
$$\frac{\partial v}{\partial x}=\left(\frac{1}{x}-1\right)v-\beta.$$ This time, we are done using the following lemma.
\end{proof}
\begin{lem}
Let $c\in\mathbb{C}^*$. The inhomogeneous linear differential equation 
$$v'=\left(\frac{1}{t}-1\right)v+c$$
has no algebraic solutions in $\mathcal{C}(t)^{alg}$.
\end{lem}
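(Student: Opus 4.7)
The plan is to adapt the argument of Lemma \ref{LemmaLast1} to the present situation, where the coefficient $1/t - 1$ of $v$ has rational residue at $t = 0$, so that Fact \ref{AlgebraicSolution1} does not by itself yield a contradiction at the level of the leading coefficient of the minimal polynomial. Instead, we have to exploit the extra rigidity that this leading coefficient is a polynomial, not merely algebraic, to close the argument.

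First, I would assume for contradiction that $f \in \mathcal{C}(t)^{alg}$ satisfies the equation. Since $c \neq 0$ we have $f \neq 0$, and $f$ is non-constant. Take its primitive irreducible minimal polynomial $F(X, Y) = \sum_{i=0}^{s} A_i(X) Y^i \in \mathcal{C}[X][Y]$, with $A_s \neq 0$ and $s \geq 1$. Differentiating $F(t, f) = 0$ and substituting $f' = (1/t - 1) f + c$, exactly as in Lemma \ref{LemmaLast1}, produces a polynomial $P \in \mathcal{C}[X][Y]$ with $P(t, f) = 0$. By irreducibility of $F$ and the usual Gauss-type argument, $P = G \cdot F$ for some $G \in \mathcal{C}[X]$, and comparing the coefficients of $Y^s$ yields the key identity
$$A_s' = \left(G - \frac{s}{t} + s\right) A_s.$$

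The logarithmic derivative of the nonzero polynomial $A_s$ is a sum of simple poles $m_j/(t - \alpha_j)$ with $m_j \in \mathbb{N}_{>0}$, and in particular has no polynomial part. Matching polynomial parts on the two sides of the displayed identity therefore forces $G = -s$, reducing it to $t A_s' + s A_s = 0$. Writing $A_s = \sum_{k=0}^{n} a_k t^k$ and comparing coefficients of $t^k$ gives $(k + s)\, a_k = 0$ for every $k$; since $s \geq 1$, every $a_k$ vanishes, contradicting $A_s \neq 0$.

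The main subtlety is precisely that Fact \ref{AlgebraicSolution1} alone would not close the argument: the equation $v' = -(s/t) v$ does admit the algebraic (in fact rational) solutions $c_0 t^{-s}$, so one cannot rule out an algebraic $A_s$ purely on the basis of residues being rational. The argument therefore rests on the observation that in the construction above $A_s$ must simultaneously satisfy this equation \emph{and} be a polynomial, a combination that, for $s \geq 1$, has no nonzero solutions.
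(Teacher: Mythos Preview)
Your argument is essentially the paper's: differentiate the minimal polynomial of a putative algebraic solution, write the result as $GF$, extract the homogeneous equation satisfied by the leading coefficient $A_s$, and then use that $A_s$ is a \emph{polynomial} (not merely algebraic over $\mathcal{C}(t)$) to rule out the case that Fact~\ref{AlgebraicSolution1} alone does not exclude. The only cosmetic difference is that where the paper invokes Fact~\ref{AlgebraicSolution1} to dispose of the case $G + s \neq 0$, you argue directly from the partial-fraction shape of $A_s'/A_s$; the endgame (no nonzero polynomial solves $v' = -(s/t)v$ for $s\ge 1$) is identical.
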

\begin{proof}
The proof is very similar to that of Lemma \ref{LemmaLast1}.  We write $Q=\frac{1}{t}-1$ and argue by contradiction that $f\in\mathcal{C}(t)^{alg}$ is a solution of $v'=Qv+c_2$. Using the irreducible polynomial $F=\sum^{s}_{i=0}A_iY^i\in\mathcal{C}[X,Y]$ such that $F(t,f)=0$, $A_i\in\mathcal{C}[X]$ and $A_s\neq0$, we obtain (after differentiation) a polynomial $P=\sum^{s}_{i=0}B_iY^i\in\mathcal{C}[X,Y]$ such that 
\[P(t,f)=\sum^{s}_{i=0}A'_if^i+\sum^{s}_{i=1}iQA_if^{i}+\sum^{s}_{i=1}icA_if^{i-1}=0\]
As before, using irreducibility of $F$ and the fact that $P$ have the same degree in $Y$, we get a polynomial $G\in\mathcal{C}[X]$ such that $P=GF$. After comparing coefficients of $f^s$ in $P(t,f)=G(t)F(t,f)$ we get
$$B_s:=A'_s+sQA_s=GA_s.$$
In other words, using $Q=\frac{1}{t}-1$, we get $A_s$ is a nonzero solution in $C(t)^{alg}$ of the equation
$$v'=\left(G+s-\frac{s}{t}\right)v.$$
If $G(t)+s\neq0$, then Fact \ref{AlgebraicSolution1} gives a contradiction. On the other hand when $G(t)+s=0$, we have that $A_s$ is a non-zero {\em polynomial} solution in $C(t)^{alg}$ of  $v'=\left(\frac{-s}{t}\right)v.$ Since $s\in\mathbb{N}\setminus\{0\}$ we have that all non-zero solutions in $C(t)^{alg}$ of this equation are of the form $v=\frac{d}{t^s}$ for some constant $d\in\mathbb{C}^*$. This is again a contradiction.
\end{proof}

\end{document}